\documentclass[hidelinks]{amsart}
\usepackage[english]{babel}
\usepackage[utf8]{inputenc}
\usepackage[T1]{fontenc}
\usepackage{lmodern}
\usepackage{latexsym}
\usepackage{amsthm}
\usepackage{a4}
\usepackage{amsfonts}
\usepackage{bm}
\usepackage{amssymb}
\usepackage{amsmath}
\usepackage{subfigure}
\usepackage{eucal}
\usepackage{amscd}
\usepackage{amsmath,mathtools}
\usepackage{array}
\usepackage{float}
\usepackage{setspace}
\usepackage{listings}
\usepackage{graphicx}
\usepackage{xifthen}

\usepackage[style=alphabetic,backend=biber,hyperref=true,giveninits=true]{biblatex}
\usepackage{subfiles}
\usepackage{hyperref}
\usepackage{csquotes}
\usepackage{wrapfig}
\usepackage{array}
\usepackage{lipsum}
\usepackage{verbatim}
\usepackage{nicematrix}

\usepackage{tikz-cd}
\usetikzlibrary{intersections}
\usetikzlibrary{math}

\tikzset{
	partial ellipse/.style args={#1:#2:#3}{
		insert path={+ (#1:#3) arc (#1:#2:#3)}
	}
}

\usepackage{tikz}
\usepackage{pgfplots}
\pgfplotsset{compat=newest}

\newtheorem{theorem}{Theorem}[section]
\newtheorem{proposition}[theorem]{Proposition}
\newtheorem{lemma}[theorem]{Lemma}

\theoremstyle{definition}
\newtheorem{definition}[theorem]{Definition}

\newtheorem{remark}[theorem]{Remark}

\newcommand{\Ho}{\textup{H}}

\newcommand{\Pic}{\operatorname{Pic}}

\newcommand{\Id}{\operatorname{Id}}
\newcommand{\Image}{\operatorname{Im}}
\newcommand{\Ker}{\operatorname{Ker}}
\newcommand{\Zt}{\mathbb{Z}/2\mathbb{Z}}

\newcommand{\Bun}{\mathfrak{Bun}}
\newcommand{\EM}{EM^{0,\star}_2(g,n,r)}
\newcommand{\HHplus}{H^+ \cdot H^+}

\DeclareFieldFormat[article]{title}{\mkbibemph{#1}}
\DeclareFieldFormat[incollection]{title}{\mkbibemph{#1}}
\AtEveryBibitem{
	\clearlist{language}
	\clearfield{isbn}
	\clearfield{issn}
	\clearfield{pages}
	\clearfield{url}
}
\DefineBibliographyStrings{english}{
  in = {},
}
\DeclareFieldFormat[article]{number}{\mkbibparens{#1}} 
\renewbibmacro*{volume+number+eid}{%
  \printfield{volume}%
  \printfield{number}%
  \setunit{\addspace}%
  \printfield{eid}}

\DeclareFieldFormat{edition}{%
 {#1~ed.}}

\newbibmacro*{edition+year+pages}{%
  \iffieldundef{edition}
    {}
    {\setunit{}
     \mkbibparens{\printfield{edition}%
       \iffieldundef{year}
         {}
         {\space\printfield{year}}}}%
  \iffieldundef{pagetotal}
    {}
    {\addcomma\space\printfield{pagetotal}}%
}

\newbibmacro*{bookseries}{%
  \iffieldundef{series}
    {}
    {\printfield{series}%
     \iffieldundef{number}
       {}
       {\addspace\printfield{number}}%
     \addperiod\space}}

\DeclareBibliographyDriver{book}{%
  \usebibmacro{bibindex}%
  \usebibmacro{begentry}%
  \usebibmacro{author/editor+others/translator+others}%
  \setunit{\labelnamepunct}\newblock
  \usebibmacro{title}%
  \newunit\newblock
  \usebibmacro{bookseries} 
  \printlist{publisher}%
  \newunit
  \usebibmacro{edition+year+pages} 
  \newunit\newblock
  \usebibmacro{finentry}%
}

\begin{document}

\title[Cohomology of the Moduli Stacks of Real Vector Bundles]{Cohomology of the Moduli Stacks of Real Vector Bundles on Type I Real Algebraic Curves}
\author{Luca Dal Molin \and Frank Neumann}
\address{Dipartimento di Matematica\\ Università degli Studi di Trento\\ Trento}
\email{luca.dalmolin@unitn.it}
\address{Dipartimento di Matematica "Felice Casorati"\\ Università degli Studi di Pavia\\ Pavia}
\email{frank.neumann@unipv.it}

\begin{abstract}
	We study the moduli stacks of real vector bundles of fixed rank and degree on a type I real algebraic curve and determine its mod $2$ cohomology algebra in terms of characteristic classes induced from the complex Atiyah-Bott classes.
\end{abstract}

\subjclass[2020]{\textbf{14H60}; 14D23, 14P25}
\keywords{cohomology of moduli stack, real algebraic curves, real vector bundles.}

\maketitle

\section{Introduction}

\noindent The scope of this paper is to present a detailed calculation of the mod $2$ cohomology algebra of the moduli stack of real vector bundles of fixed rank and degree over a real algebraic curve together with a complete and explicit description  of the cohomology classes that generate this algebra. Our primary focus here will be the geometrically significant case of real algebraic curves of type I, namely those for which the real structure disconnects the associated compact Riemann surface into two connected components and, equivalently, the quotient surface is orientable. Real algebraic curves of this type occupy a distinguished position in the theory of real algebraic curves, both because of their rich topological structure and because they provide a setting in which methods from gauge theory, equivariant algebraic topology, and the geometry of moduli spaces interact in a particular transparent manner.

The topology of moduli spaces of vector bundles over complex algebraic curves has been extensively studied since the foundational work of Atiyah and Bott \cite{zbMATH03803614}, who interpreted the Yang--Mills equations on a Riemann surface as an equivariant Morse-theoretic problem on the space of connections. Their approach led to a far-reaching description of the cohomology of moduli spaces and stacks of vector bundles in terms of natural universal characteristic classes. Subsequent works, notably those of Kirwan, Newstead, and others \cite{zbMATH05827280,zbMATH05831562,zbMATH05605320}, refined these methods and produced a detailed understanding of the cohomology algebra in the complex case. In particular, the moduli stack of vector bundles over a compact Riemann surface admits a rich and highly structured cohomology algebra generated by tautological classes arising from universal bundles and their characteristic classes. In contrast, the corresponding theory for real algebraic curves remains substantially less developed. The presence of an anti-holomorphic involution introduces new topological phenomena that are invisible in the complex setting. Real vector bundles also carry additional discrete invariants, and the associated moduli spaces inherit subtle equivariant structures that significantly complicate their topology. Nevertheless, important progress has been achieved in recent years through the works of Liu--Schaffhauser \cite{zbMATH06272210} and Baird \cite{zbMATH06355665}, who computed Poincaré series for moduli spaces of semistable real vector bundles. Although their methods differ substantially in character, both approaches reveal that the natural object underlying these calculations is not merely the coarse moduli space, but rather the full moduli stack itself as it retains the automorphism data and equivariant information necessary for a conceptual understanding of the topology.

The present work is motivated by the idea that the stack-theoretic perspective should provide the natural framework for extending the Atiyah--Bott picture to the real setting. Our starting point is given by the works of Liu--Schaffhauser \cite{zbMATH06272210} and Baird \cite{zbMATH06355665} which provide a calculations for the Poincaré series of the related coarse moduli space. But as noted by the authors, in order to obtain the desired result they have to pass through the Poincaré series of the whole moduli stack which is our genuine interest here. Our goal here is not only to recover numerical invariants such as Poincaré series, but also to obtain a structural description of the whole mod $2$ cohomology algebra itself. To this end, we develop a unified approach that synthesizes and stackifies the methods introduced by Liu--Schaffhauser and Baird. The guiding principle is that the equivariant and stratified structures appearing in the study of real gauge theory should admit a direct interpretation at the level of the moduli stack, thereby allowing one to identify canonical cohomology generators of the cohomology algebra.

Our adaptation is motivated by related calculations in the complex case. It traces back to the fundamental work of Atiyah-Bott \cite{zbMATH03803614}. In the complex case we have a complete description of the cohomology algebra of the  moduli stack of vector bundles of fixed rank and degree over a Riemann surface (see \cite{zbMATH05827280}, \cite{zbMATH05831562} and \cite{zbMATH05605320}). A central role in our analysis is played by an adaption of the complex Atiyah-Bott characteristic classes to the real setting. In addition to the classical characteristic classes familiar from complex geometry, one encounters cohomological invariants associated with orthogonal and equivariant structures, among which the Dickson invariants appear naturally. These classes encode subtle information about the topology of real bundles and their symmetry groups, and they provide the algebraic framework necessary for describing the mod $2$ cohomology algebra explicitly. One of the main objectives of this article is to explain how these invariants arise geometrically and how they generate the cohomology algebra of the moduli stack. The restriction to type I curves allows several important simplifications while still retaining the essential geometric features. In this case the decomposition of the Riemann surface induced by the real structure leads to a particularly tractable description of the relevant gauge-theoretic moduli spaces and their equivariant cohomology. Moreover, the topology of the real locus interacts favorably with the Harder--Narasimhan stratification, making it possible to carry out explicit calculations and identify the corresponding universal classes in a concrete way. In follow-up work we will address the cases of type 0 and type II curves.

\noindent This article is organized as follows: In Sections 2 and 3 we recall the necessary background on real algebraic curves, real vector bundles, and the relevant cohomological invariants. Particular emphasis is placed on the topology of type I curves, equivariant cohomology techniques, and the role of Dickson invariants in the description of the cohomology algebra. Section \ref{sec:rk1type1} treats the rank $1$ case in detail, where the essential geometric ideas already become visible. We then extend these constructions to arbitrary rank real vector bundles and derive the general description of the mod $2$ cohomology generators.

\section{Real algebraic curves}

Let $M$ be a connected Riemann surface, from which we will construct our real algebraic curve. In order to do so, we will follow closely Atiyah's approach in \cite{zbMATH03235829} on real vector bundles over real algebraic curves.

\begin{definition}
	We call a pair $(M,\sigma)$, where $M$ is a connected Riemann surface and $\sigma$ is an antiholomorphic involution, a \textit{Klein surface}. We will define a \textit{real algebraic curve} as the fixed point locus of the action of this involution on the Riemann surface, which we will denote as $M^\sigma$.
\end{definition}

A Klein surface $(M,\sigma)$ is completely characterized by the following topological invariants, as introduced in \cite{zbMATH02681780}:
\begin{itemize}
	\item $g$: the genus of the Riemann surface $M$, which we will always assume to be greater or equal $2$;
	\item $n$: the number of connected components of the fixed point locus $M^\sigma$;
	\item $a$: a value that can be $0$ or $1$ with respect to the number of connected components of the complement $M \setminus M^\sigma$ and hence the orientability of the quotient.
\end{itemize}

Using these invariants, especially the third one, we can divide real algebraic curves into three disjoint classes. These are the ones given by the following definition.

\begin{definition}
	Let $(M,\sigma)$ be a Klein surface. A real algebraic curve will be called:
\begin{itemize}
	\item \textit{type 0}: if $n=0$, so the involution $\sigma$ acts freely;\\
	\item \textit{type I}: if $a=0$, so $M \setminus M^\sigma$ is made of two distinct connected components;\\
	\item \textit{type II}: if $a=1$ and $n \neq 0$, so $M \setminus M^\sigma$ is made of a single connected component.
\end{itemize}
\end{definition}

\begin{proposition}
	Given a type 0 real algebraic curve $(M,\sigma)$, we have that $a=1$. Also the involution $\sigma$ is given by the antipodal map.
\end{proposition}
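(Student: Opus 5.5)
We argue directly from the definitions of the invariants $(g,n,a)$, using only elementary topology of the quotient $M/\sigma$. Let $(M,\sigma)$ be of type 0, so $n=0$ and $\sigma$ acts freely on $M$. Then $M\setminus M^\sigma = M$, which is connected by assumption. Since $a=0$ means exactly that $M\setminus M^\sigma$ has two connected components, we must have $a=1$. Equivalently, the quotient map $\pi\colon M\to M/\sigma$ is a connected two-sheeted covering. The involution $\sigma$ is the nontrivial deck transformation and reverses orientation, being antiholomorphic. Hence the closed surface $N=M/\sigma$ is non-orientable and $\pi$ is its orientation double cover.

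For the second assertion, which is the real content, we interpret ``$\sigma$ is the antipodal map'' up to topological conjugacy. Since $\pi$ is the orientation double cover, $N$ is a closed non-orientable surface with $\chi(N)=\chi(M)/2 = 1-g$. By the classification of surfaces, $N$ is homeomorphic to a connected sum of $g+1$ copies of $\mathbb{R}P^2$.

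We then produce a model. Embed $M$ in $\mathbb{R}^3$ symmetrically about the origin, for example as the boundary of a regular neighbourhood of a centrally symmetric graph, so that $x\mapsto -x$ restricts to a free orientation-reversing involution $\alpha$ of $M$. Its quotient is a closed non-orientable surface of Euler characteristic $1-g$, hence homeomorphic to $N$. Both $\sigma$ and $\alpha$ make $M$ the orientation double cover of $N$. Since the orientation double cover is unique up to isomorphism of coverings, corresponding to the orientation character $\pi_1(N)\to\Zt$, the two coverings are isomorphic. An isomorphism of coverings is a homeomorphism $\phi\colon M\to M$ with $\phi\circ\sigma=\alpha\circ\phi$, so $\sigma$ is conjugate to the antipodal map.

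The main obstacle is the explicit symmetric embedding, and in particular the parity of $g$. For $g$ odd, a chain of $g$ handles through the origin works directly. For $g$ even, the origin lies on $M$ itself, since $-\mathrm{Id}$ fixes only the origin and $M$ must avoid it for $\alpha$ to be free. I would first try a torus-like embedding in which the origin lies in the bounded hole, with handles arranged in antipodal pairs. If this does not work, I would instead cite the standard classification of free involutions on closed surfaces, which states they are determined up to conjugacy by genus and orientation behaviour. This reduces the claim to exhibiting one antipodal model in each genus.
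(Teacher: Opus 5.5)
Your derivation of $a=1$ is the same as the paper's: $M^\sigma=\varnothing$, so $M\setminus M^\sigma=M$ is connected. For the antipodal statement the paper gives no argument and simply cites the classification of involutions on surfaces \cite{zbMATH07144072}. Your reduction is a more self-contained route, and its core is sound. $\sigma$ is free and orientation-reversing, so $\pi$ is the orientation double cover of a non-orientable $N$ with $\chi(N)=1-g$. Since orientation double covers are canonical, any two free orientation-reversing involutions of $M$ are conjugate. What remains is to exhibit, in every genus, one embedding $M\subset\mathbb{R}^3$ that is invariant under $-\Id$ and misses the origin. Your last paragraph leaves the even-genus case unproved, and partly gets it wrong.

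The assertion that for $g$ even ``the origin lies on $M$'' is false, and the remedy you propose to try first cannot work. Let $R$ be the compact region bounded by an invariant $M$ with $0\notin M$, so $\chi(R)=\chi(M)/2=1-g$.
\begin{itemize}
\item If $0\notin R$ (for instance, $0$ in the central hole of a torus), then $-\Id$ acts freely on $R$. So $\chi(R)$ is even and $g$ is odd.
\item This is exactly why a torus with the origin in its hole, plus handles added in antipodal pairs, only produces genus $1+2k$.
\item For even $g$ the origin must instead lie in the interior of $R$, which is harmless.
\end{itemize}
Take the boundary of a thin $\varepsilon$-neighbourhood of a connected planar graph $\Gamma\subset\{z=0\}$ with $b_1(\Gamma)=g$, invariant under $(x,y)\mapsto(-x,-y)$ and passing through $0$. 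For $g=2$, use a segment through $0$ with a circle attached at each end; for general even $g$, attach a chain of $g/2$ circles at each end. Then:
\begin{itemize}
\item $0$ is interior to the neighbourhood, so $-\Id$ restricts to a free involution of $M$;
\item it reverses the orientation of $M$, because it reverses that of $\mathbb{R}^3$ while carrying outward normals to outward normals.
\end{itemize}
This is just your own ``centrally symmetric graph'' construction, without requiring $\Gamma$ to avoid the origin. Your fallback of citing the classification of free involutions does not close the gap. That classification is the uniqueness statement you already proved, and it still needs one antipodal model in each genus.
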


\begin{proof}
	This map is described explicitly in \cite{zbMATH07144072}. In particular, having $\sigma$ acting freely, we obtain that $M^\sigma=\varnothing$ and so $M \setminus M^\sigma$ is made of a single connected component and therefore $a=1$.
\end{proof}

\begin{remark}
	Usually the type 0 case is seen as a sub case of type II real algebraic curves. Here we consider them as disjoint cases, in order to have a more clear exposition. This is also the reason for why we do not need to put the condition $n \neq 0$ for type I real algebraic curves. Having that $M \setminus M^\sigma$ splits into two distinct connected components already implies this condition.
\end{remark}

Between the three invariants $g, n, a$ we have a strong relation. In particular, there exists a number $g^\prime \geq 0$ such that:
\begin{equation}
	\label{eq:gprime}
	g=2g^\prime+n-1, \text{\phantom{aaaaaaa} if } a=0
\end{equation}
\begin{equation}
	\label{eq:gprime0}
	g=2g^\prime+n+c, \text{\phantom{aaaaaaa} if } a=1
\end{equation}
where $c$ is equal to 1 if $g-n$ is odd or 0 if $g-n$ is even.

Concluding we state Harnack's inequality (originally introduced in \cite{zbMATH02714346}) which gives us the following relation:
\[
	n \leq g+1.
\]
This motivates the following definition (see \cite{zbMATH04210331}):

\begin{definition}
	\label{def:MCurve}
	Let $(M,\sigma)$ be a Klein surface, then if $g=n+1$ we call the associated real algebraic curve an \textit{M--curve} or \textit{maximal curve}.
\end{definition}

\begin{proposition}
	If $(M,\sigma)$ is an M--curve, then it is of type I.
\end{proposition}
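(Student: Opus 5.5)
The plan is to use only the relations \eqref{eq:gprime} and \eqref{eq:gprime0} between $g$, $n$, $a$, together with Harnack's inequality. First a remark on Definition \ref{def:MCurve}: the literal condition $g=n+1$ cannot be what is meant. For instance, genus $2$ curves with $n=1$ and $a=1$ exist, and they would be counted as M--curves of type II. I will therefore read the maximality condition as equality in Harnack's inequality, $n=g+1$, which is the standard notion of a maximal curve and the one suggested by the sentence introducing the definition. Under this reading the statement becomes: if $n=g+1$, then $a=0$.

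The argument then excludes the other two types in turn.
\begin{itemize}
\item \emph{Type 0 is excluded.} Since $g\geq 2$, we get $n=g+1\geq 3$, so $n\neq 0$.
\item \emph{Type II is excluded.} Suppose $a=1$. Then \eqref{eq:gprime0} gives $g=2g^\prime+n+c$ with $g^\prime\geq 0$ and $c\in\{0,1\}$. Hence $g\geq n=g+1$, a contradiction.
\end{itemize}
Since $a$ can only take the values $0$ or $1$, we must have $a=0$, which is exactly the type I condition. As a consistency check, \eqref{eq:gprime} with $n=g+1$ gives $g=2g^\prime+g$, so $g^\prime=0$. This matches the picture of $M/\sigma$ as a sphere with $g+1$ holes.

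The main obstacle is that the whole proof rests on \eqref{eq:gprime0}, which the paper states without proof. If it had to be justified, I would compute Euler characteristics. When $a=1$, the quotient $M/\sigma$ is a non-orientable compact surface with $n$ boundary circles. Write it as a connected sum of $k\geq 1$ projective planes with $n$ discs removed, so $\chi(M/\sigma)=2-k-n$. Since $\sigma$ fixes only circles, which have Euler characteristic zero, $\chi(M)=2\chi(M/\sigma)$. This gives $2-2g=4-2k-2n$, that is, $g=k+n-1\geq n$ because $k\geq 1$. This inequality is all the argument needs. So even without the precise form of \eqref{eq:gprime0}, the strict inequality $n\leq g$ for $a=1$ comes from the fact that a non-orientable quotient has at least one cross-cap. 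Combined with $n=g+1$, it forces $a=0$.
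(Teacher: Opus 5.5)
Your proof is correct and is essentially the paper's argument. The paper also reads the maximality condition as $n=g+1$ (despite the typo $g=n+1$ in Definition~\ref{def:MCurve}), plugs it into \eqref{eq:gprime0} with $c=1$ to get the impossible $2g^\prime=-2$, and notes that \eqref{eq:gprime} holds with $g^\prime=0$. Your use of the bound $g\geq n$ for $a=1$ is the same idea in a slightly more robust form: it sidesteps the paper's slip ``$g-n=1$'', and you justify it independently by the Euler characteristic count $g=k+n-1$.
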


\begin{proof}
	This is a classical results (see )\cite{zbMATH04210331}). It can be also derived from equations (\ref{eq:gprime}) and (\ref{eq:gprime0}). Just fix the invariants $g$ and $n=g+1$, now it must exists a $g^\prime$ such that one of the two equations is verified, and for equation \ref{eq:gprime0} we get $c=1$, because $g-n=1$.

	Now if we suppose $a=1$ we get
		$-2 = 2g^\prime,$
	which is impossible, while equation (\ref{eq:gprime}) is realized with $g^\prime=0$ and so we have $a=0$.
\end{proof}

Now we can look at vector bundles defined over a Klein  surface and start to define the moduli stacks.

\begin{definition}
	A smooth complex vector bundle $E$ over $(M,\sigma)$ is called a \textit{real vector bundle} when it is equipped with a function $\tau$ such that:
	\begin{itemize}
	\item $\tau$ is $\mathbb{C}$--antilinear on the fiber,
	\item $\tau^2=\Id_E$,
	\item the following diagram commutes:
	\[
		\begin{tikzcd}[ampersand replacement=\&]
			E \arrow[r,"\tau"]\arrow[d] \& E \arrow[d] \\
			M \arrow[r,"\sigma"] \& M
		\end{tikzcd}
	\]
	\end{itemize}
\end{definition}

We also have to define the concept of gauge transformation that will be useful in various discussions.

\begin{definition}
	\label{def:gauge}
	Let $E \rightarrow M$ be an holomorphic vector bundle. The \textit{complex gauge group} $\mathcal{G}_\mathbb{C}$ is given by all maps $f: E \rightarrow E$ such that the following diagram commutes:
	\[
		\begin{tikzcd}[ampersand replacement=\&]
			E \arrow[r,"f"]\arrow[d] \& E \arrow[d] \\
			M \arrow[r,"\Id"] \& M
		\end{tikzcd}
	\]
	We will also say that $f$ \textit{lifts} the identity map.
\end{definition}

With these definitions, we can now construct the moduli stack of real vector bundles over a real algebraic curve in the following way (see \cite{zbMATH06272210}):
\begin{definition}
	\label{def:realstack}
	Let $(E,\tau)$ be a real vector bundle of rank $r$ and degree $d$ over a fixed Klein surface $(M,\sigma)$ and consider the space $\mathcal{C}^\tau$ of all possible holomorphic structures compatible with $\tau$ along with the subgroup of the complex gauge group $\mathcal{G}_\mathbb{C}^\tau \subset \mathcal{G}_\mathbb{C}$ given by all elements that commute with $\tau$. The associated moduli stack is defined as the quotient stack
	\[
		\Bun^{r,d,\tau}_{M,\sigma}:=[\mathcal{C}^\tau/\mathcal{G}_\mathbb{C}^\tau].
	\]
	It classifies the rank $r$ and degree $d$ vector bundles with real structure $\tau$ over the Klein surface $(M,\sigma)$.
\end{definition}

A fundamental property described in \cite{zbMATH06141803} is that for all possible choices of a real structure $\tau$ we have that the moduli stack $\Bun^{r,d,\tau}_{M,\sigma}$ in fact describes a connected component of the fixed point locus of the complex moduli stack and that these components are all homeomorphic. But these do not give all components of the complex moduli stack, because there will also be quaternionic ones, namely for the case that $\tau^2=-\Id$. We will not take these quaternionic components into account here though.

But also the converse is true, namely for any connected component of the complex moduli stack that consists of real vector bundles there exists a real structure $\tau$ such that the considered component can be identified with the real moduli stack $\Bun^{r,d,\tau}_{M,\sigma}$.

The real moduli stack is built in the same way as the complex moduli stack $\Bun^{r,d}_M$ of complex vector bundles of rank $r$ and degree $d$ over the Riemann surface $M$
\[
	\Bun^{r,d}_M=[\mathcal{C}/\mathcal{G}_\mathbb{C}],
\]
where $\mathcal{C}$ is the set of holomorphic structures and $\mathcal{G}_\mathbb{C}$ is the complex gauge group (\cite[Section 1.2 and 1.3]{zbMATH06272210}). Having that $\mathcal{C}^\tau$ is contractible, the real moduli stack has in fact the homotopy type of the classifying space $B\mathcal{G}_\mathbb{C}^\tau$ of the real gauge group $\mathcal{G}_\mathbb{C}^\tau$.

\begin{proposition}
	Let $(M,\sigma)$ be a Klein surface. For each choice of $r,d,\tau$ the moduli stack $\Bun^{r,d,\tau}_{M,\sigma}$ is an algebraic stack.
\end{proposition}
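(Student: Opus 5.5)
The plan is to realize $\Bun^{r,d,\tau}_{M,\sigma}$ as a quotient of a scheme by a smooth group scheme, over the base $\operatorname{Spec}\mathbb{R}$. The analytic quotient $[\mathcal{C}^\tau/\mathcal{G}_\mathbb{C}^\tau]$ will be identified with the analytification of that algebraic quotient.

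\emph{Step 1: descend the curve.} By GAGA, the compact Riemann surface $M$ is a smooth projective complex curve. The antiholomorphic involution $\sigma$ is a descent datum for the Galois extension $\mathbb{C}/\mathbb{R}$, so Galois descent (effective here because $M$ is projective) yields a smooth projective geometrically connected curve $X$ over $\mathbb{R}$ with $X_\mathbb{C}\cong M$ and complex conjugation acting as $\sigma$.

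\emph{Step 2: descend the bundles.} In the same way, a holomorphic structure on $E$ compatible with $\tau$ is a holomorphic bundle on $M$ with a $\sigma$-antilinear lift $\tau$ satisfying $\tau^2=\Id$, which is exactly a descent datum. Hence such bundles correspond to algebraic vector bundles on $X$ of rank $r$ and degree $d$. Their topological type is fixed by $\tau$, namely by the restrictions to the $n$ real circles, i.e.\ the $w_1$ of each real component. So the fibred category of families of such bundles is a union of open and closed substacks $\Bun^{r,d,\tau}_X$ of the stack $\Bun^{r,d}_X$ of all rank $r$, degree $d$ bundles on $X$. Here I use that the invariants of the real locus are locally constant in flat families.

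\emph{Step 3: algebraicity.} I use the standard fact that $\Bun^{r,d}_X$ is algebraic for a projective curve over a field. Concretely, for $m\gg0$ the open substack of bundles $F$ with $F(m)$ globally generated and $H^1(F(m))=0$ is $[Q_m/\mathrm{GL}_N]$. Here $Q_m$ is an open subscheme of a Quot scheme over $\mathbb{R}$, and these substacks cover $\Bun^{r,d}_X$ as $m$ grows. An open and closed substack of an algebraic stack is algebraic, which gives the claim for $\Bun^{r,d,\tau}_X$.

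The main obstacle is matching this algebraic stack with Definition~\ref{def:realstack}. One must check that the $\mathcal{G}_\mathbb{C}^\tau$-orbits on $\mathcal{C}^\tau$ and their stabilizers agree with the isomorphism classes and automorphism groups of real algebraic bundles. For this I would argue that the $\tau$-fixed part of the isomorphism (resp.\ $\mathcal{C}^\tau\to$ moduli) correspondence of Atiyah--Bott in the complex case, as recalled from \cite{zbMATH06272210}, respects the real structures. On automorphisms, I would identify $\operatorname{Aut}$ of a real bundle with the fixed points of the induced conjugation on the complex automorphism group. This is the step requiring the most care, particularly the identification of components via $\tau$. Everything else is descent and standard Quot-scheme theory.
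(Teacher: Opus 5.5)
\textbf{The step that fails.} Your Steps 1 and 3 are sound: Galois descent of $(M,\sigma)$ and $(E,\tau)$ to a real curve $X$, and the Quot-scheme presentation of $\Bun^{r,d}_X$. The bridge between them breaks, however. For $n\ge 2$, the bundles of a fixed type $\tau$ do \emph{not} form an open and closed substack of $\Bun^{r,d}_X$.

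\begin{itemize}
\item $\Bun^{r,d}_X\times_{\mathbb{R}}\mathbb{C}\cong\Bun^{r,d}_M$ is connected and surjects onto $\Bun^{r,d}_X$. Hence $\Bun^{r,d}_X$ is connected and has no nontrivial open and closed substacks.
\item The invariants $w_1(E^\tau|_{S^1_{(i)}})$ exist only at real points. They are locally constant for the Euclidean topology on $T(\mathbb{R})$, but not Zariski- or fppf-locally on an $\mathbb{R}$-scheme $T$. Over $T=\operatorname{Spec}\mathbb{C}$ they are undefined, and over a connected $\mathbb{R}$-scheme with disconnected real locus they can jump.
\item Rank one already shows this. $\Pic^d_X$ is geometrically connected, since its complexification is the torus $\Pic^d(M)$. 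Its real locus $\Pic^d(M)^\sigma$ splits into the $2^{n-1}$ pieces $\Pic^d(M)^\sigma_\tau$ of Section~\ref{sec:rk1type1}, and each piece is Zariski dense. A real Poincar\'e family over $\Pic^d_X$ has fibres of every type $\tau$.
\end{itemize}

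So what your argument actually yields is that $\Bun^{r,d,\tau}_{M,\sigma}$ is a connected component of the \emph{real locus} of the algebraic $\mathbb{R}$-stack $\Bun^{r,d}_X$. The principle ``an open and closed substack of an algebraic stack is algebraic'' therefore cannot be applied. To conclude, you need either a genuinely new input at this point or a weaker target notion, such as ``a component of the real points of an algebraic stack''. The matching with $[\mathcal{C}^\tau/\mathcal{G}^\tau_\mathbb{C}]$, which you single out as the main obstacle, is secondary to this.

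\textbf{Comparison with the paper.} The paper does not descend the curve. It regards $\Bun^{r,d,\tau}_{M,\sigma}$ as a connected component of the fixed-point stack of the $\Zt$-action $\mathcal{E}\mapsto\overline{\sigma^\star\mathcal{E}}$ on $\Bun^{r,d}_M$. It then obtains algebraicity of that fixed-point stack from \cite[Theorem 3.3]{zbMATH02187053} and passes to connected components.

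Your descent is a more concrete way to produce the ambient algebraic object. It also handles explicitly the fact that this action is antilinear, which a theorem about group actions over $\mathbb{C}$ does not directly cover. Made precise via Galois descent, that fixed-point stack is just your $\Bun^{r,d}_X$. However, the paper's final passage to ``connected components'' rests on the same identification of the $\tau$-pieces with algebraic components, so it does not supply the missing ingredient either.
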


\begin{proof}
	As we have seen above, the moduli stack $\Bun^{r,d,\tau}_{M,\sigma}$ is a connected component of the fixed point locus of the moduli stack $\Bun^{r,d}_M$ by the map induced from the action of $\sigma$ on vector bundles $\mathcal{E}$, namely:
		\begin{tikzcd}[ampersand replacement=\&]
			\mathcal{E} \arrow[r] \& \overline{\sigma^\star\mathcal{E}}.
		\end{tikzcd}
	Now having an action of the group $\Zt$ generated by $\sigma$ on $\Bun^{r,d}_M$, we can apply \cite[Theorem 3.3]{zbMATH02187053}. From this we get that the fixed point stack is still algebraic, and so are its connected components.
\end{proof}

\section{Construction of a cohomology basis of a Klein surface and the Dickson invariant}
\label{sec:basisConstruction}

Let, as above, $(M,\sigma)$ be a Klein surface, where $M$ is of genus $g\geq 2$. The involution $\sigma$ of $M$ induces an involution $\sigma^\star$ on the cohomology $\Ho^1(M,\Zt)$. Remembering that the dimension of this first cohomology group is $2g$, once we choose a basis, we can represent $\sigma^\star$ by a $2g \times 2g$--matrix over $\Zt$.

From the article of Levine and Nahikian \cite{zbMATH03183062} we know that there exists a choice of a basis such that the matrix of $\sigma^\star$ is of the following type:
\begin{equation}
	\label{eq:matrixSigma}
	\begin{pNiceArray}{c c}
		\Id_{2g-2s} & \mathbf{0} \\
		\mathbf{0} & K_{2s} \\
	\end{pNiceArray},
\end{equation}
where $0 < s \leq g$ and $K_{2s}$ is the direct sum of $s$ matrices of the form:
\[
	\begin{pNiceArray}{c c}
        \mathbf{1} & \mathbf{1} \\
        \mathbf{0} & \mathbf{1} \\
    \end{pNiceArray}.
\]

Let us recall now briefly the explicit description of the generators for the cohomology of the complex moduli stack of vector bundles over a Riemann surface (see details for example in \cite[Ch. 3]{zbMATH05605320}, \cite{zbMATH05831562} or \cite{zbMATH05827280}). We first fix a basis $(\alpha_j)_{j=1,\dots,2g}$ of $\Ho^1(M,\Zt)$. For the moduli stack $\Bun^{r,d}_M$ of rank $r$ and degree $d$ vector bundles over $M$, we consider the universal vector bundle over $\Bun^{r,d}_M\times M$ which gives a morphism of algebraic stacks
\[
\begin{tikzcd}[ampersand replacement=\&]
	\Bun^{r,d}_M \times M \arrow[r] \& BGL_r,
\end{tikzcd}
\]
where $BGL_r$ is the classifying stack of all rank $r$ vector bundles. We can consider the pullbacks of the Chern classes of the universal bundle over $BGL_r$ with respect to this morphism. Now thanks to Künneth decomposition we get the following description for the Chern classes of the universal bundle over $\Bun^{r,d}_M\times M$ for each $i \leq r$ and $j=1, 2, \ldots, 2g$
\begin{equation}
	\label{eq:neuGenerator}
	c_i \otimes 1 + \sum^{2g}_{j=1} a^{(i)}_j \otimes \alpha_j + f_i \otimes \left[M\right].
\end{equation}

This gives us the Atiyah-Bott classes $c_i, f_i, a^{(i)}_j$ for $i=1, 2, \ldots r$ and $j=1, 2,\ldots, 2g$ as generators for the cohomology algebra $\Ho^\star(\Bun^{r,d}_M,\Zt)$ of the complex moduli stack $\Bun^{r,d}_M$.

In order to obtain the generators for the real moduli stack, we need to describe the matrix given in equation (\ref{eq:matrixSigma}) in details, and also give an explicit description of the basis of $\Ho^1(M,\Zt)$ chosen in such a way that the matrix is directly expressed in terms of $\sigma$ and its action on $M$. We will also need a more direct interpretation of the Atiyah-Bott cohomology classes that generate the cohomology of the moduli stack of complex vector bundles in our specific setting. We will discuss all of these points in the next subsections.

\subsection{The Dickson invariant}
First of all we introduce the Dickson invariant of the involution $\sigma$. This invariant permits us to distinguish the conjugacy classes of the involution. It will be equal to the number $s$ which appeared already in equation (\ref{eq:matrixSigma}). This follows from the discussion in \cite{arXiv:1612.08487, zbMATH07144072}.

\begin{definition}
	Let $V$ be a symplectic vector space of dimension $2g$ over $\Zt$. Given an involution $\sigma$ over $V$, the \textit{Dickson invariant} of $\sigma$ is an integer $0 \leq D \leq g$ given as the rank of the map
	$\sigma + \Id$.
\end{definition}

From \cite[Chapter 5]{zbMATH07144072} we get a full description of all the possible involutions that we can have over a surface of genus $g$.

In particular all the involutions that give us type I curves are the ones of the reflection form (see figure \ref{fig:reflection}), while the ones that give type II curves are given by some kind of antipodal form (see figure \ref{fig:ddOld}). By this we get that we can classify both classes by the number of holes that they reflect, that is equal to the $g^\prime$ of equation (\ref{eq:gprime}) or (\ref{eq:gprime0}).

\begin{proposition}
	\label{prop:dick}
	For a type I curve $(M,\sigma)$ we have that:
	\[
		D = 2g^\prime.
	\]
\end{proposition}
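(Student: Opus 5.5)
Since $D$ is the rank of $\sigma^\star+\Id$ on $V=\Ho^1(M,\Zt)$, I will compute that rank from an explicit topological model of a type I curve. By (\ref{eq:gprime}) we have $g=2g^\prime+n-1$. The plan is to decompose $M$ along the fixed curves and read off a $\sigma^\star$-adapted basis of $V$. Concretely, $M\setminus M^\sigma=M_+\sqcup M_-$, where $\sigma$ exchanges the two halves. Each closure $\overline{M_\pm}$ is an orientable surface of genus $g^\prime$ with $n$ boundary circles $C_1,\dots,C_n$ (the components of $M^\sigma$). This is the reflection model of \cite[Chapter 5]{zbMATH07144072}: $M$ is the double of $\overline{M_+}$, and $\sigma$ is the reflection swapping the two copies.

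Next I choose a basis of $\Ho_1(M,\Zt)$ (dual to $\Ho^1$ via Poincaré duality, which is $\sigma$-equivariant up to the sign, and the sign is irrelevant mod $2$). It consists of three families.
\begin{itemize}
\item The classes of $C_1,\dots,C_{n-1}$, which are pointwise fixed, so $\sigma_\star[C_k]=[C_k]$.
\item Arcs $\delta_k$ in $\overline{M_+}$ from $C_k$ to $C_n$, closed up by their mirror images to form loops $\gamma_k=\delta_k\cup\sigma(\delta_k)$ for $k=1,\dots,n-1$. These satisfy $\sigma_\star\gamma_k=\gamma_k$.
\item A symplectic basis $a_1,b_1,\dots,a_{g^\prime},b_{g^\prime}$ of the handles of $M_+$, together with their images $\sigma a_i,\sigma b_i$ in $M_-$.
\end{itemize}
This gives $2(n-1)+4g^\prime=2g$ classes. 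Their independence mod $2$ can be checked through the intersection pairing: $C_k\cdot\gamma_l=\delta_{kl}$, the handle classes of $M_\pm$ are disjoint from the $C_k$ and $\gamma_l$ after isotopy, and the handle classes pair symplectically among themselves. The pairing matrix on these $2g$ classes is therefore unimodular, so they form a basis.

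In this basis, $\sigma_\star+\Id$ kills the $2(n-1)$ classes $C_k,\gamma_k$. On each pair $\{x,\sigma x\}$ with $x\in\{a_i,b_i\}$ it acts by $x\mapsto x+\sigma x$ and $\sigma x\mapsto x+\sigma x$. Each of these $2g^\prime$ blocks has rank $1$, and their images $x+\sigma x$ are linearly independent. Hence $\operatorname{rank}(\sigma_\star+\Id)=2g^\prime$. The same count holds on cohomology by duality, since $\sigma^\star$ is the transpose of $\sigma_\star$ and transposition preserves rank. Thus $D=2g^\prime$.

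The main obstacle is justifying the model. One must show that every type I involution is conjugate to the reflection of the double of a genus-$g^\prime$ surface with $n$ boundary components; I would cite \cite[Chapter 5]{zbMATH07144072} and the classification via $(g,n,a)$ from \cite{zbMATH02681780}. A second delicate point is that $\sigma_\star\gamma_k=\gamma_k$ holds exactly in homology, including orientation: the reflection reverses the orientation of $\gamma_k$, but over $\Zt$ orientation does not matter. A consistency check: under the splitting of (\ref{eq:matrixSigma}) the result means $s=g^\prime$, since each block $K_2$ contributes rank $1$ to $\sigma^\star+\Id$, so there are $2g^\prime$ such blocks and the identity part has dimension $2(n-1)$.
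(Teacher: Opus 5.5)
Your proof is correct and follows essentially the same route as the paper. In the reflection model, $\Ho^1(M,\Zt)$ splits into the $n-1$ fixed-circle classes, the $n-1$ classes cut in half by the mirror, and the $4g^\prime$ handle classes of the two halves, and only the $2g^\prime$ pairs $\{x,\sigma x\}$ contribute to $\Image(\sigma^\star+\Id)$. Your explicit basis, the independence check via the intersection form, and the transpose argument make rigorous what the paper only sketches.

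There is one slip in your closing consistency check. $K_{2s}$ is made of $s$ blocks, each contributing rank one to $\sigma^\star+\Id$, so $D=s$ and the result gives $s=2g^\prime$, not $s=g^\prime$. This also matches your own count of $2g^\prime$ blocks.
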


\begin{proof}
	First of all we notice that if $\alpha \in \Image(\sigma^\star + \Id)$ then $\sigma^\star(\alpha)=\alpha$ because $\sigma^\star$ is an involution. Also if $\sigma^\star(\alpha)=\alpha$ then $(\sigma^\star + \Id)(\alpha)=0$.

	If we take $M/\sigma$ as the quotient space we know that it is isomorphic to $\Sigma_{g^\prime,n}$, where $\Sigma_{g^\prime,n}$ is a Riemann surface of genus $g^\prime$, defined in equation (\ref{eq:gprime}), with $n$ open disks removed.

	Let $\pi$ be the projection map and let $\pi^\star$ be the induced map for the first cohomology group. Obviously we have that the elements of $\Image(\pi^\star)$ are $\sigma^\star$--invariant by the definition of the quotient map.

	Now in $\Ho^1(M/\sigma,\Zt)$ we can identify two types of classes, precisely $2g^\prime$ classes given by the generators of $\Ho^1(T_{g^\prime},\Zt)$, where $T_g^\prime$ is a Riemann surface of genus $g^\prime$, and $n-1$ classes added by removing the $n$ disks (the last one is a combination of these). This set of classes gives a basis for the first cohomology $\Ho^1(M/\sigma,\Zt)$.

	But now the $n-1$ classes added by the disks live inside the reflection plane and so cannot be in the image of $\sigma^\star + \Id$, the same is true for the other $n-1$ classes that do not live in $\Image(\pi^\star)$ and that are cut in half by the plane. This observation leaves us exactly $2g^\prime$ classes that are not $\sigma^\star$--invariants that, under the action of $\sigma^\star + \Id$, gives us the other $2g^\prime$ classes that generate $\Image(\sigma^\star + \Id)$ and so for the Dickson invariant we get $D=2g^\prime$.
\end{proof}

\begin{proposition}
	\label{prop:dickII}
	For a type II curve $(M,\sigma)$ we have that:
	\[
		D=2g^\prime+c,
	\]
	where $c=1$ if $g-n$ is even or $c=2$ if $g-n$ is odd.
\end{proposition}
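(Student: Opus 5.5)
The plan is to compute $D=\operatorname{rank}(\sigma^\star+\Id)$ on $\Ho^1(M,\Zt)$ directly. I will cut $M$ into $\sigma$-invariant pieces, read off the matrix of $\sigma^\star$ block by block, and add up the ranks. This mirrors the proof of Proposition \ref{prop:dick}, where the reflection model gave $D=2g^\prime$. Throughout, the key structural fact is that the matrix can be put in the Levine--Nahikian normal form (\ref{eq:matrixSigma}), with $D=s$. So it suffices to count how many blocks $K_2$ appear; each contributes rank $1$ to $\sigma^\star+\Id$, and each $\Id_2$ block contributes $0$.

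First, I use the classification of involutions of type II from \cite[Chapter 5]{zbMATH07144072}, in the ``antipodal form'' of figure \ref{fig:ddOld}. This lets me write $M$ as an equivariant connected sum of three kinds of pieces.
\begin{itemize}
\item $g^\prime$ pairs of handles interchanged by $\sigma$. Each pair carries a $4$-dimensional summand of $\Ho^1$ on which $\sigma^\star$ swaps two symplectic planes. With respect to the basis $\{x,\sigma^\star x\}$ per swapped class, this gives two $K_2$ blocks, hence rank $2$. Together these contribute $2g^\prime$, exactly as in type I.
\item Handles cut by the fixed circles, i.e.\ the pieces producing the $n$ components of $M^\sigma$. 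As in the proof of Proposition \ref{prop:dick}, the classes of the fixed circles and their duals crossing the mirror are $\sigma^\star$-invariant. They therefore contribute $\Id$ blocks and nothing to $D$.
\item The remaining ``twisted'' part, which makes the quotient $M/\sigma$ non-orientable. It consists of invariant handles on which $\sigma$ acts antipodally, i.e.\ a crosscap in the quotient is doubled. On the corresponding symplectic plane $\langle a,b\rangle$, $\sigma^\star$ fixes $a$ and sends $b\mapsto a+b$, so it is a $K_2$ block of rank $1$.
\end{itemize}

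Mayer--Vietoris for the equivariant connected sum then shows that $\Ho^1(M,\Zt)$ is the $\sigma^\star$-stable direct sum of these summands. Consequently $D=2g^\prime+(\text{number of twisted handles})$.

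The main obstacle is the bookkeeping that identifies the number of twisted handles with $c$ as stated, i.e.\ $1$ or $2$ according to the parity of $g-n$. This has to be reconciled with the genus relation (\ref{eq:gprime0}) and with the Euler characteristic of the non-orientable quotient $M/\sigma$, which is $\chi(M)/2=1-g$ since $\chi(M^\sigma)=0$. I would first count crosscaps in $M/\sigma$ from this Euler characteristic and the $n$ boundary circles. Non-orientable surfaces with a fixed number of boundary components can be normalized to have one or two crosscaps, modulo trading pairs of crosscaps for handles that become swapped pairs upstairs. The parity of $g-n$ then decides whether one or two remain, and each remaining crosscap lifts to a single $K_2$ block.

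Finally, I would sanity-check the formula on small cases before trusting the parametrization. For the torus with one fixed circle and Möbius-band quotient, $\sigma^\star=K_2$, so $D=1$, $g^\prime=0$ and $g-n=0$ is even, consistent with $c=1$. I would also test a genus-$2$ example with $n=1$ to confirm the $c=2$ branch.
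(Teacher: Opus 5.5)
Your argument is correct, but it is organized differently from the paper's.

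\textbf{The paper's route.} The paper works in the single model of \cite{zbMATH07144072}: a genus-$(g-n)$ surface with the antipodal involution, glued by a double connected sum to a genus-$(n-1)$ surface with a reflection. It then computes $\sigma^\star$ on an explicit basis. Because the gluing is along two swapped tubes, $\sigma^\star$ has cross terms, for example $\sigma^\star(\alpha_{-i})=\alpha_i+(a_1+a_2+a_3)$ and $\sigma^\star(b_1)=b_1+(a_1+a_2+a_3)+\sum\beta_i$. So the paper has to find a spanning set of $\Image(\sigma^\star+\Id)$ by hand. The first extra generator is the tube class $a_1+a_2+a_3$. When $g-n$ is odd, the second is $\beta_k$, coming from the middle invariant handle of the antipodal surface.

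\textbf{Your route.} You arrange a $\sigma^\star$-stable splitting, so that the rank is additive block by block. You then read off the parity from the quotient. Since $\chi(M/\sigma)=1-g$, there are $k=g-n+1$ crosscaps. Dyck's relation rewrites this as $g^\prime$ handles plus $c\in\{1,2\}$ crosscaps with $c\equiv g-n+1 \pmod 2$, which is exactly $g=2g^\prime+n+(c-1)$ as in (\ref{eq:gprime0}).

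\textbf{What each buys.} Your approach gives the cleaner, model-independent statement $D=g-n+1$, the crosscap number of $M/\sigma$, and makes the parity dichotomy transparent. The paper's approach instead produces explicit generators of $\Image(\sigma^\star+\Id)$ in a concrete basis.

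\textbf{The step you must make precise.} The phrase ``an invariant handle on which $\sigma$ acts antipodally'' is ambiguous, and the Mayer--Vietoris splitting is not automatic.
\begin{itemize}
\item A closed genus-one piece with a free antipodal involution contributes nothing. On $\mathbb{C}/(\mathbb{Z}+i\mathbb{Z})$ with $z\mapsto\bar z+\tfrac12$, one has $\sigma^\star=\Id$ mod $2$.
\item If the piece is the lift of a M\"obius band, i.e.\ a tube glued along two swapped disks, Mayer--Vietoris does not split off $\langle a,b\rangle$ by itself. Besides $a$, the class $b+\sigma^\star b$ picks up a contribution supported outside the tube, coming from the closing path $\delta$ of $b$ and its image $\sigma(\delta)$. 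This is exactly the kind of cross term the paper meets.
\end{itemize}
There are two ways to fix this.
\begin{itemize}
\item Choose $\delta$ to be $\sigma$-symmetric through a real point, which is possible since $n\geq 1$. Then the contributions of $\delta$ and $\sigma(\delta)$ cancel and $b+\sigma^\star b=a$. So $\langle a,b\rangle$ is a $\sigma^\star$-stable hyperbolic plane with the $K_2$ action. Its orthogonal complement is then a stable, equivariant copy of the cohomology of the rest of the surface.
\item Alternatively, realize each twisted handle as an equivariant connected sum at a real point with the genus-one curve having one real oval (your own sanity-check example). Mayer--Vietoris then gives an honest equivariant direct sum, and $n$ is unchanged. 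The curve becomes of type II, and the classification by $(g,n,a)$ identifies your model with $(M,\sigma)$.
\end{itemize}
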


\begin{proof}
	We note that in \cite[Proposition 9.13]{zbMATH07144072} a less general version of this proposition is proved. In particular, the two cases with $g^\prime$ being equal to $0$ are considered, and it is shown that $D$ is equal to $0$ or $1$ as expected. Furthermore only the special case $n=3$ is considered, but this limitation can be removed.

	First of all we fix the invariants of the curve, namely the genus $g$ and the number of connected components $n$. We will give the proof for $n=3$, the generalization for arbitrary $n$ is then straightforward. It is simple to see that in this case the parity of $g-n$ is the opposite of the one of $g$.

	Similar as in \cite{zbMATH07144072} we can model the curve in the following way:
	\begin{itemize}
		\item First of all take a surface $T_{g-n}$ of genus $g-n$ with antipodal involution acting on it;
		\item then take an open cylinder with the mid plane reflection and glue it to $T_{g-n}$ by removing two antipodal disks from it;
		\item repeat this procedure with $n$ cylinders, removing each time the disks from the previous cylinder.
	\end{itemize}

	We can also understand this as a double connected sum of a surface $T^\prime_{n-1}$ of genus $n-1$ with $T_{g-n}$ where the involution is given by the reflection through a plane on $T^\prime_{n-1}$ and the antipodal map on $T_{g-n}$.

	We illustrate this procedure in the following figures for the case of $n=3$:

	\begin{figure}[H]
		\centering
    	\begin{minipage}{0.45\textwidth}
        	\centering
        	\includegraphics[width=\textwidth]{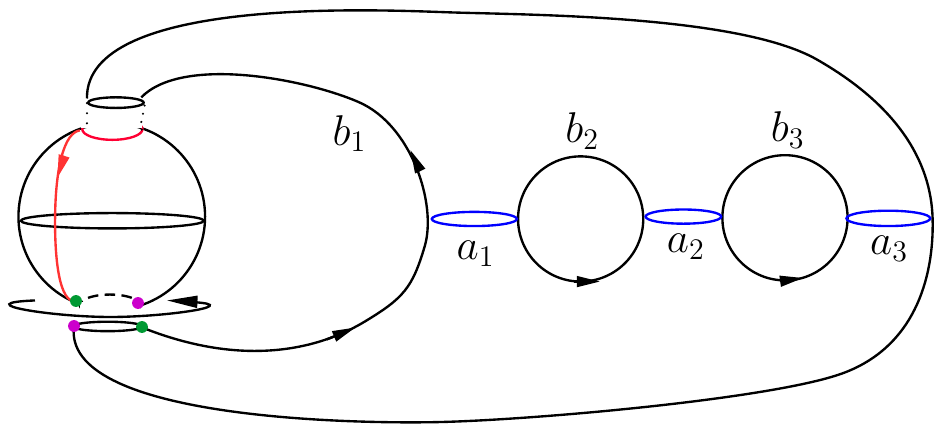}
			\caption{\\Case $g=n$}
			\label{fig:ddOld}
    	\end{minipage}\hfill
    	\begin{minipage}{0.45\textwidth}
        	\centering
        	\includegraphics[width=\textwidth]{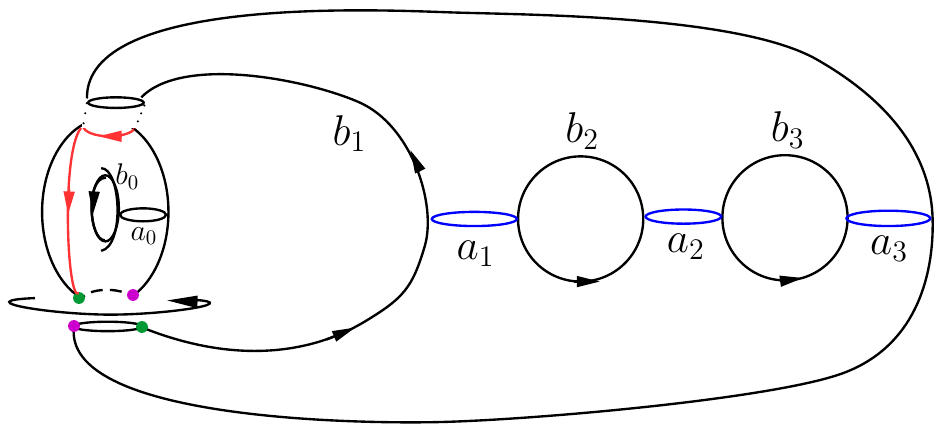}
			\caption{\\Case $g=n+1$}
    	\end{minipage}
	\end{figure}

	More generally we have the following picture:

	\begin{center}
		\begin{figure}[H]
			\includegraphics[width=0.75\textwidth]{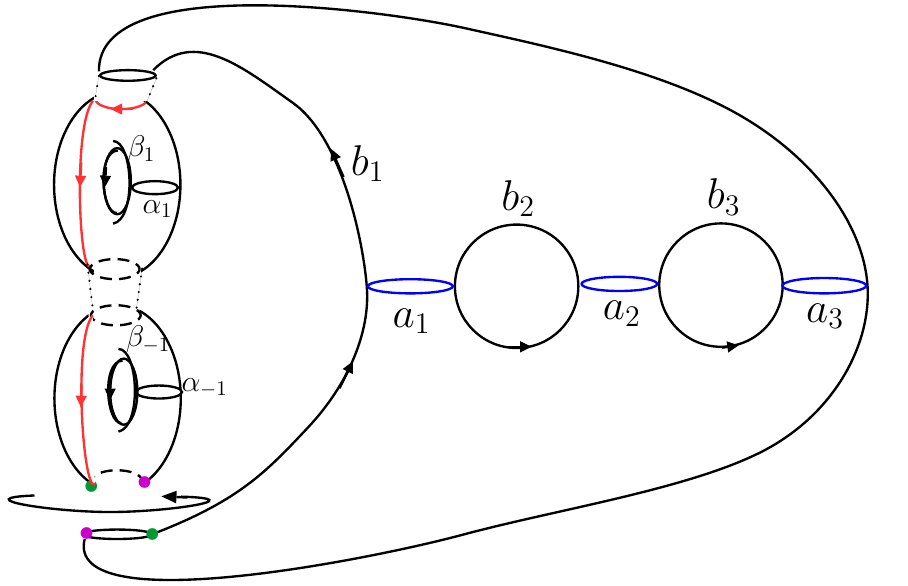}
			\caption{General case}
			\label{fig:ddNew}
		\end{figure}
	\end{center}

	Now, as illustrated in figure \ref{fig:ddNew}, we call $a_1,a_2,a_3,b_1,b_2,b_3$ the basis of $T_3$ once we remove two disks and glue with $T_{g-3}$ (we can imagine it as a surface of genus $3+1$), while the generators of $T_{g-3}$ will be called, from top to bottom, $\alpha_1=\alpha_{-(g-n)},\dots,\alpha_{g-n}=\alpha_{-1}$ and $\beta_1=\beta_{-(g-n)},\dots,\beta_{g-n}=\beta_{-1}$. We have that $\sigma^\star$ keeps the $a_i$ fixed for each $i$ and $b_i$ for $i \neq 1$, while it sends $\alpha_j$ into $\alpha_{-j}$ and similarly for the $\beta_j$. There are two cases to be distinguished:

	\textit{Case $g-n$ even}:\\
	If we take $\alpha_i + \sigma^\star(\alpha_i{-i})$, it is cohomologous to the circuit $c$ given by $a_1+a_2+a_3$. In fact $\sigma(\alpha_{-i})$ gives us the circuit parallel to $\alpha_i$, and so we have that $\sigma^\star(\alpha_{-i})=\alpha_i+c$ for each $i$. And then $(\sigma+\Id)^\star(\alpha_i)=\alpha_i+\alpha_{-i}+c$.

	In the same way it is easy to see that $\beta_i+\beta_{-i} \neq 0$ can be obtained from $(\sigma+\Id)^\star(\beta_i)$.

	Finally we are left with $\sigma^\star(b_1)$, but in the same fashion as \cite{zbMATH07144072} we can show that $\sigma(b_1)+b_1$ is cohomologous to $c+\sum^{g-n}_{i=1}\beta_i$.

	From this we conclude that $D=2g^\prime+1$ and the generators are given by $\alpha_i+\alpha_{-i}$, $\beta_i+\beta_{-i}$ and $c$. The first two sets of generators give exactly $2g^\prime$ generators by construction.

	\textit{Case $g-n$ odd}:\\
	The process is the same as for the case $g-n$ even. The only difference is that now we have an index, $k=\lceil \frac{g-n}{2} \rceil$, such that $\alpha_k=\alpha_{-k}$ and $\beta_k=\beta_{-k}$.

	Now as before we have that $(\sigma+\Id)^\star(\alpha_i)=\alpha_i+\alpha_{-i}+c=c$ and so we get $c$ as a generator. Furthermore $\beta_k$ is fixed by $\sigma^\star$, but if we consider $(\sigma+\Id)^\star(b_1)$ we obtain, as above, that $\sigma(b_1)+b_1$ is cohomologous to $c+\beta_k+\sum^{g-n-1}_{i=1}\beta_i$.

	From this we conclude that $D=2g^\prime+2$ and the generators are given by $\alpha_i+\alpha_{-i}$, $\beta_i+\beta_{-i}$, $\beta_k$ and $c$. Again the first two sets of generators give exactly $2g^\prime$ generators.
\end{proof}

\subsection{Completion of the cohomology basis}
\label{sub:basis}
First of all we fix a basis of the subspace $\Image(\sigma^\star+\Id)$ and call it $(\alpha_i)_{i=1,\dots,D}$. By definition it is made up of $D$ elements.

Now consider the projection map $M \rightarrow M/\sigma$ to the quotient space of the action $\sigma$. WE have the following induced map in cohomology:
\[
	\begin{tikzcd}[ampersand replacement=\&]
		\pi^\star: \Ho^1(M/\sigma,\Zt) \arrow[r] \& \Ho^1(M,\Zt).
	\end{tikzcd}
\]

Because we have that $\sigma^\star(\alpha_i)=\alpha_i$ there exists classes $(\alpha^\prime_i)_{i=1,\dots,D}$ in $\Ho^1(M/\sigma,\Zt)$ such that $\pi^\star(\alpha^\prime_i)=\alpha_i$. We complete them to a basis $(\alpha^\prime_i)_{i=1,\dots,D},(\beta^\prime_i)_{i=1,\dots,n-1}$ of $\Ho^1(M/\sigma,\Zt)$. This follows because we have $\dim\left(\Ho^1(M/\sigma,\Zt)\right)=g$ and $g-D=n-1$.

Taking now the images through $\pi^\star$ of the above classes, and remembering that \[(\sigma^\star+\Id)^2=0,\] we can complete them to a basis $(\alpha_i)_{i=1,\dots,D},(\beta_i)_{i=1,\dots,n-1}(\gamma_i)_{i=1,\dots,n-1}$ of $\Ker(\sigma^\star+\Id)$. This follows because the elements $\beta_i$ are $\sigma^\star$ invariant and so they live in the kernel. Also by inspecting the matrix (\ref{eq:matrixSigma}) we get that $\dim(\Ker)=D+2n-2$.

Finally we obtain a basis for the cohomology $\Ho^1(M,\Zt)$ by simply taking:
\[
	(\alpha_i)_{i=1,\dots,D},(\beta_i)_{i=1,\dots,n-1}(\gamma_i)_{i=1,\dots,n-1},(A_i)_{i=1,\dots,D},
\]
with $(\sigma^\star+\Id)A_i=\alpha_i$. These last classes will be linearly independent because the $\alpha_i$ are linearly independent and they do not live in the kernel of the map by construction.

For completeness, to obtain exactly what we have written in equation (\ref{eq:matrixSigma}) as matrix of $\sigma^\star$, we have to reorder this basis in the following way:
\[
	(\beta_i, \gamma_i)_{i=1,\dots,n-1},(A_i,\alpha_i)_{i=1,\dots,D}.
\]
In fact we have that $\sigma^\star(A_i)=A_i+\alpha_i$ by construction, and the matrix of this transformation is therefore given as:
\[
	\begin{pNiceArray}{c c}
        \mathbf{1} & \mathbf{1} \\
        \mathbf{0} & \mathbf{1} \\
    \end{pNiceArray}.
\]

\section{Rank 1 case for type I curves}
\label{sec:rk1type1}

\subsection{Gerbe construction}

As a first task we want to build the moduli stack as part of an \mbox{$\mathbb{R}^\star$--gerbe} in the rank one case. In this way we will simplify the cohomology calculations that will follow.

Let us recall that, fixing a non--empty Klein Surface $(M,\sigma)$, we have that $\Pic^d(M)^\sigma$ is given by the disjoint union of $2^{n-1}$ connected components all diffeomorphic to $(S^1)^g$ inside the torus $\Pic^d(M)$, as can be seen from \cite{zbMATH03845739} and \cite{zbMATH06141803}.

From \cite{zbMATH03845739} and \cite{zbMATH06141803}, we also get that these connected components are indexed by the real structures $\tau$ that define a real bundle.
Also remembering that $\Pic^d(M)$ is a coarse moduli space and all the involutions are applied fiberwise, we obtain a map
\[
	\begin{tikzcd}[ampersand replacement=\&]
		\Bun^{1,d,\tau}_{M,\sigma} \arrow[r] \& \Pic^d(M)^\sigma_\tau,
	\end{tikzcd}
\]
such that $\Pic^d(M)^\sigma_\tau$ is a coarse moduli space for the moduli stack in the rank one case.

Now remember that, by definition, this map produces isomorphism classes. This is coherent with the construction because we have chosen $r=1$. In this case we also have that all the bundles that we are considering are actually stable bundles. Furthermore we also know that the geometric fiber of this map is given by $B\mathbb{R}^\star$, the classifying stack of $\mathbb{R}^\star$. This follows because a real stable bundle has only $\mathbb{R}^\star$ as real automorphism group.

We now have the following proposition:

\begin{proposition}
	\label{prop:split}
	The moduli stack $\Bun^{1,d,\tau}_{M,\sigma}$ splits in the following way:
	\[
		\Bun^{1,d,\tau}_{M,\sigma} \cong B\mathbb{R}^\star \times \Pic^d(M)^\sigma_\tau.
	\]
\end{proposition}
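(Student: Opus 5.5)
The plan is to imitate the standard complex argument that $\Bun^{1,d}_M \cong B\mathbb{C}^\star \times \Pic^d(M)$. The key input is a real Poincaré (universal) line bundle, which trivializes the gerbe. By the discussion preceding the statement, the map $\Bun^{1,d,\tau}_{M,\sigma} \to \Pic^d(M)^\sigma_\tau$ is a coarse moduli map. Its geometric fibres are $B\mathbb{R}^\star$, because every real line bundle is stable with real automorphism group $\mathbb{R}^\star$. Since $\mathbb{R}^\star$ is abelian and acts by scalars on every object, this map is an $\mathbb{R}^\star$-gerbe. A gerbe banded by an abelian group $G$ over $X$ is isomorphic to $BG \times X$ precisely when it admits a section. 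So the proof reduces to producing a section of $\Bun^{1,d,\tau}_{M,\sigma} \to \Pic^d(M)^\sigma_\tau$, that is, a family of real line bundles over $M$ parametrized by $\Pic^d(M)^\sigma_\tau$ whose classifying map is the identity on points.

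To build the section, I would fix a real point $x_0 \in M^\sigma$; this is possible since for a type I curve $n \geq 1$. I would then take the complex Poincaré bundle $\mathcal{P}$ on $\Pic^d(M) \times M$, normalized so that $\mathcal{P}|_{\Pic^d(M)\times\{x_0\}}$ is trivial. The involution $(L,x) \mapsto (\overline{\sigma^\star L}, \sigma(x))$ pulls $\mathcal{P}$ back to its conjugate. Because both bundles are Poincaré bundles with the same normalization at the real point $x_0$, they are canonically isomorphic; the normalization removes the ambiguity by a line bundle from the base. This isomorphism gives an antilinear lift $\tilde\tau$ of the involution. Restricting to $\Pic^d(M)^\sigma_\tau \times M$, the square $\tilde\tau^2$ is a fibrewise scalar equal to $1$ at $x_0$, hence equal to $1$ everywhere. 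The pair $(\mathcal{P}|, \tilde\tau)$ is therefore a family of real line bundles, and its real type is $\tau$ on the component $\Pic^d(M)^\sigma_\tau$. This family defines the desired section $s$.

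With the section in hand, the isomorphism is the map $B\mathbb{R}^\star \times \Pic^d(M)^\sigma_\tau \to \Bun^{1,d,\tau}_{M,\sigma}$ sending a real line $\ell$ over $T$ and a map $f\colon T \to \Pic^d(M)^\sigma_\tau$ to $f^\star(\mathcal{P},\tilde\tau) \otimes \ell$. The inverse sends a family $\mathcal{L}$ to the pair consisting of its coarse image $f$ and the line $\mathrm{Hom}(f^\star\mathcal{P}, \mathcal{L})$, pushed forward along $M$; this line is real and invertible because $\mathcal{L}$ and $f^\star \mathcal{P}$ are fibrewise isomorphic real line bundles. Alternatively, one can argue homotopically: the stack has the homotopy type of $B\mathcal{G}^\tau_\mathbb{C}$, and the section splits the fibration $B\mathbb{R}^\star \to B\mathcal{G}^\tau_\mathbb{C} \to \Pic^d(M)^\sigma_\tau$.

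The main obstacle is the existence of the real structure $\tilde\tau$ on the Poincaré bundle. One must check that the obstruction, a class in $\Ho^2(\Pic^d(M)^\sigma_\tau,\mathbb{R}^\star)$ or in Galois cohomology with $\mathbb{C}^\star$ coefficients, actually vanishes. I expect the normalization at a real point $x_0 \in M^\sigma$ to be exactly what kills it, which would explain why the type 0 case, where $M^\sigma$ is empty, behaves differently. To carry this out, I would first verify carefully that $\tilde\tau^2$ is a global constant, using that $\Pic^d(M)^\sigma_\tau \times M$ is compact and connected, so that $\mathcal{O}^\star$ has only constant global sections.
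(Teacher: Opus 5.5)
Your proposal is correct and follows the same route as the paper: build a Poincaré family over $\Pic^d(M)^\sigma_\tau$, which gives a section of the $\mathbb{R}^\star$-gerbe, and conclude as in \cite[Lemma 3.10]{zbMATH05831562} that the gerbe is trivial. It is in fact more complete than the paper's argument, which only checks pointwise that each $\mathcal{E}_{|L}$ admits a real structure (via \cite[Proposition 2.8]{zbMATH06141803}), whereas your rigidification at $x_0\in M^\sigma$ constructs the real structure $\tilde\tau$ on the whole family. One small correction: argue that $\tilde\tau^2$ is constant either on the complex manifold $\Pic^d(M)\times M$ or fibrewise on each $\{L\}\times M$, and not via $\mathcal{O}^\star$ on $\Pic^d(M)^\sigma_\tau\times M$, which is not a complex manifold.
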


\begin{proof}
	From \cite[Corollary 3.12]{zbMATH05831562} it follows that there exists a Poincaré family over $\Pic^d(M)$, so there exists a bundle $\mathcal{E}$ over $M \times \Pic^d(M)$ such that for each line bundle of degree $d$ $L$ we have that $\mathcal{E}_{|L} \in [L]$.

	Having $M$ of genus $g\geq 2$ and of type I, it follows that $\Pic^d(M)^\sigma$ is not empty. If $L$ is a real bundle of degree $d$ we have the following isomorphisms:
	\[
		\mathcal{E}_{|L} \cong L \cong \overline{\sigma^\star L} \cong \overline{\sigma^\star \mathcal{E}_{|L}},
	\]
	and so also $\mathcal{E}_{|L}$ is a real bundle.

	Now from \cite[Proposition 2.8]{zbMATH06141803} these two bundles are isomorphic also as real bundles. So we get that they live in the same connected component of $\Pic^d(M)^\sigma$.

	Therefore we get a Poincaré family over $\Pic^d(M)^\sigma_\tau$, and similar as in \cite[Lemma 3.10]{zbMATH05831562} we get that the gerbe splits as described.
\end{proof}

Applying now the Künneth decomposition in cohomology to the splitting of Proposition \ref{prop:split}, we obtain the following structure for the cohomology algebra of the moduli stack

\vspace*{-0.5cm}

\begin{equation}
	\label{eq:kunnethSplit}
	\Ho^\star(\Bun^{1,d,\tau}_{M,\sigma},\Zt)=\Zt[\omega_1] \otimes \Lambda[a_i]_{i=1,\dots,g},
\end{equation}

where every cohomology class is of degree $1$.

\subsection{Main theorem}

Before formulating and proving our main theorem in the rank 1 case for type I curves, we will need to recall some constructions, following \cite{zbMATH06355665}:

\begin{definition}
	The group $LU_r$ of continuous maps from $S^1$ to $U_r$  (the group of complex matrix of rank $r$ with determinant equal to $1$) is called the \textit{loop group}. With $L_0U_r$ we will identify the connected component of the identity.
\end{definition}

\begin{definition}
	\label{def:loop}
	The group $LU^\tau_r$ of gauge transformation (recall definition \ref{def:gauge}) of the real vector bundle of rank $r$, $(E,\tau) \rightarrow (S^1,\sigma)$, where $\sigma$ is the identity map of $S^1$ is called the \textit{real loop group}.
\end{definition}

\begin{proposition}
	\label{prop:loop}
	Let $BLU^{\Id}_r$ be the classifying space of the loop group $LU^{\Id}_r$. We have the following isomorphism:
	\[
		\Ho^\star(BLU^{\Id}_r,\Zt) \cong \Ho^\star(SO(r),\Zt) \otimes \Zt[\omega_1,\dots,\omega_r], \text{ where } \deg(\omega_k)=k.
	\]
\end{proposition}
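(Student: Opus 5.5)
The approach is to identify $LU^{\Id}_r$ with a familiar topological group and then use the standard splitting of loop groups. When $\sigma=\Id$ on $S^1$, a real structure $\tau$ on the trivial bundle $S^1\times\mathbb{C}^r$ is an antilinear involution on each fibre. Its fixed points form a real rank $r$ bundle $E^\tau\to S^1$, and $E\cong E^\tau\otimes_{\mathbb{R}}\mathbb{C}$. Unitary gauge transformations commuting with $\tau$ are exactly the orthogonal gauge transformations of $E^\tau$. We may take $E^\tau$ trivial: the nontrivial (Möbius-twisted) case gives a twisted loop group, but $\tau$ has been fixed so that the bundle is trivial. Hence $LU^{\Id}_r\simeq LO(r)=\mathrm{Map}(S^1,O(r))$. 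Since the unitary group is a deformation retract of the complex gauge group, we may work equally with $GL$ or $U$.

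The key step is the standard splitting of the free loop group. Evaluation at a base point gives a split extension
\[
1\to \Omega O(r)\to LO(r)\to O(r)\to 1,
\]
and the constant loops provide the section. Therefore $LO(r)\cong \Omega O(r)\rtimes O(r)$, and
\[
BLO(r)\simeq E O(r)\times_{O(r)} B\Omega O(r)\simeq EO(r)\times_{O(r)} O(r)^{\mathrm{ad}}=L BO(r),
\]
using $B\Omega O(r)\simeq O(r)_0$ componentwise. More precisely, $BLG\simeq LBG$, the free loop space of $BG$, with $G=O(r)$. So we must compute $\Ho^\star(LBO(r),\Zt)$.

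Next, use the free loop fibration $\Omega BO(r)\simeq O(r)\to LBO(r)\to BO(r)$, which has a section given by constant loops. The base has cohomology $\Zt[w_1,\dots,w_r]$; renaming these as $\omega_k$ with $\deg\omega_k=k$ gives the polynomial factor. The fibre contributes $\Ho^\star(O(r),\Zt)$. Restricting to the identity component, $\Ho^\star(SO(r),\Zt)$ is an exterior-type algebra (a simple system of generators in Borel's sense). I would prove collapse of the Serre spectral sequence and a Leray–Hirsch statement as follows. Since $LBO(r)\simeq EO(r)\times_{O(r)}O(r)^{\mathrm{ad}}$, the cohomology is equivariant cohomology of $O(r)$ acting on itself by conjugation. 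The generators of $\Ho^\star(SO(r),\Zt)$ come from $\Ho^\star(BO(r))$ and $\Ho^\star(BSO(r))$ through the suspension/transgression of the classes $w_k$. These extend to global classes on $LBO(r)$ via the map $S^1\times LBO(r)\to BO(r)$ followed by slant product with $[S^1]$: put $\theta_k=w_k/[S^1]$, which restrict to the suspension classes on the fibre. This is the real analogue of the Atiyah–Bott construction in equation (\ref{eq:neuGenerator}). Leray–Hirsch then gives the additive isomorphism. Multiplicativity follows because the section splits off $\Zt[\omega_k]$, and the slant classes satisfy the same relations as in $\Ho^\star(SO(r),\Zt)$, since those relations ($x_i^2=x_{2i}$) are governed by Steenrod operations that commute with slant products.

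The main obstacle is this collapse and multiplicative step, in particular showing that the slant classes generate all of $\Ho^\star(SO(r),\Zt)$ on the fibre. Mod $2$, $\Ho^\star(SO(r))$ has generators $x_1,\dots,x_{r-1}$ that arise as suspensions of $w_2,\dots,w_r$ in $BSO(r)$. I would check this against the known Serre spectral sequence for $SO(r)\to ESO(r)\to BSO(r)$, where $w_{k+1}$ transgresses to $x_k$. A secondary issue is the component bookkeeping: $O(r)$ versus $SO(r)$, and $\pi_0 LO(r)=\Zt\times\Zt$. I would handle it by restricting to the component of loops with a trivial determinant twist, which matches the convention of Definition \ref{def:loop}, where the real bundle is the fixed trivial one.
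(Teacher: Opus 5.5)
Your reduction $BLU^{\Id}_r\simeq L_0BO(r)\simeq EO(r)\times_{O(r)}SO(r)^{\mathrm{ad}}$ is correct; note that this is only the identity component of $LBO(r)$. So is the sectioned fibration $SO(r)\to L_0BO(r)\to BO(r)$, and so is Leray--Hirsch with the slant classes $\theta_k=\mathrm{ev}^\star(w_k)/[S^1]$, $2\le k\le r$. These restrict on the fibre to the simple system $x_{k-1}=\sigma^\star(w_k)$ of $\Ho^\star(SO(r),\Zt)$.

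This is a genuinely self-contained proof of the additive statement, in fact of an isomorphism of free $\Zt[\omega_1,\dots,\omega_r]$-modules. The paper gives no argument and only quotes Baird's Proposition~5.5. Your route buys an explicit, Atiyah--Bott-style description of where the generators come from.

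You also do not need to assume $E^\tau$ trivial. On the other component the fibre is $O(r)_-$, and suspension classes are primitive for the loop product. Translating by a reflection therefore identifies their restrictions with the $x_{k-1}$, and the same Leray--Hirsch argument applies.

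Your multiplicativity paragraph, however, is wrong, and it cannot be repaired. Write $\mathrm{ev}^\star(w)=1\times\omega(w)+u\times\theta(w)$ in $\Ho^\star(S^1\times L_0BO(r),\Zt)$, with $u$ the generator of $\Ho^1(S^1,\Zt)$. Since $u^2=0$, you get $\theta(ww')=\theta(w)\,\omega(w')+\omega(w)\,\theta(w')$. So slanting is a derivation, not a map that kills products.

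Steenrod squares do commute with $/[S^1]$, because $Sq(u)=u$. But this only gives $\theta_{i+1}^2=Sq^i\theta_{i+1}=\theta(Sq^iw_{i+1})$. The decomposable Wu-formula terms $w_{i-t}w_{i+1+t}$ die on the fibre, but on the total space they contribute mixed terms $\theta_{i-t}\omega_{i+1+t}+\omega_{i-t}\theta_{i+1+t}$.

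Already for $r=2$ this breaks the claim:
\begin{itemize}
\item On the identity component $\theta_1=0$, and in $BO(2)$ we have $Sq^1w_2=w_1w_2$. Hence $\theta_2^2=\omega_1\theta_2$.
\item This is nonzero, since by your own Leray--Hirsch $\omega_1^2,\omega_2,\omega_1\theta_2$ is a basis of $\Ho^2$.
\item Consequently none of the three nonzero classes $\omega_1$, $\theta_2$, $\omega_1+\theta_2$ of $\Ho^1(BLO(2),\Zt)$ squares to zero.
\item But $x_1^2=0$ in $\Ho^\star(SO(2),\Zt)\otimes\Zt[\omega_1,\omega_2]$. So no graded ring isomorphism exists at all.
\end{itemize}
As an independent check, restrict to the subgroup generated by a reflection, which acts on $SO(2)=S^1$ by inversion. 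This maps $\Ho^1$ isomorphically onto degree one of $\Ho^\star_{\Zt}(S^1)\cong\Zt[t,\xi]/(\xi^2+t\xi)$, which has no nonzero square-zero class in degree one either.

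So the proposition is true, and your argument proves it, only additively (as graded vector spaces, or as $\Zt[\omega_1,\dots,\omega_r]$-modules). You should drop the algebra claim rather than try to prove it.
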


\begin{proof}
	See \cite[Proposition 5.5]{zbMATH06355665}, where the $\omega_1, \dots, \omega_r$ are the universal Stiefel--Whitney classes.
\end{proof}

Now we can work out the generators for the cohomology algebra of the moduli stack of line bundles over a fixed type I curve $(M,\sigma)$, which we state as follows:

\begin{theorem}
	\label{th:rk1theorem}
	Given a type I curve $(M,\sigma)$ of genus $g \geq 2$ with $n$ connected components, the moduli stack $\Bun^{1,d,\tau}_{M,\sigma}$ has the following cohomology algebra:
	\[
		\Ho^\star(\Bun^{1,d,\tau}_{M,\sigma},\Zt)=\Zt[\omega_1] \otimes \Lambda[\alpha_i]_{i=1,\dots,2g^\prime} \otimes \Lambda[\beta_i]_{i=1,\dots,n-1},
	\]
	where all the cohomology classes are of degree $1$, $w_1$ is the universal Stiefel--Whitney class and the others are given by the images of the basis of $\Ho^1(M,\Zt)$ as constructed in Section \ref{sec:basisConstruction}.
\end{theorem}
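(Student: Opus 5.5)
The plan is to start from the splitting of Proposition \ref{prop:split} and its Künneth consequence (\ref{eq:kunnethSplit}). Since $\Bun^{1,d,\tau}_{M,\sigma}\cong B\mathbb{R}^\star\times \Pic^d(M)^\sigma_\tau$, and $B\mathbb{R}^\star\simeq B(\Zt)\simeq \mathbb{RP}^\infty$, the first factor contributes $\Zt[\omega_1]$. Here $\omega_1$ is the first universal Stiefel--Whitney class, which is also the degree-one generator appearing in Proposition \ref{prop:loop} for $r=1$, where $\Ho^\star(SO(1),\Zt)$ is trivial. The component $\Pic^d(M)^\sigma_\tau$ is a torus $(S^1)^g$, so its mod $2$ cohomology is an exterior algebra on $g$ degree-one classes. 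What remains is to identify these $g$ classes with the images of $\alpha_1,\dots,\alpha_{2g'},\beta_1,\dots,\beta_{n-1}$, using $g=2g'+n-1$ from (\ref{eq:gprime}) and $D=2g'$ from Proposition \ref{prop:dick}.

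To make this identification, I would realise $\Pic^d(M)^\sigma_\tau$ as a translate of the identity component of the real points of the Jacobian. Via the Abel--Jacobi description, $\Ho^1(\Pic^d(M),\Zt)\cong \Ho^1(M,\Zt)$. The first concrete step is to compute the restriction map $\Ho^1(\Pic^d(M),\Zt)\to \Ho^1(\Pic^d(M)^\sigma_\tau,\Zt)$ and show that it factors through the coinvariants $\Ho^1(M,\Zt)/\Image(\sigma^\star+\Id)$ up to the appropriate duality, or more naturally in terms of $\pi^\star\Ho^1(M/\sigma,\Zt)$. I would use the basis of Subsection \ref{sub:basis}: $\Ho^1(M/\sigma,\Zt)$ has dimension $g$ with basis $\alpha'_i,\beta'_j$, and $\pi^\star$ is injective on it with image spanned by the $\alpha_i,\beta_j$. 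The real torus $\Pic^0(M)^\sigma_0$ is the image of the norm map $L\mapsto L\otimes\overline{\sigma^\star L}$, equivalently $\Ho^1(M,\mathbb{R})^\sigma/\Ho^1(M,\mathbb{Z})^\sigma$. Its first mod $2$ cohomology is therefore naturally identified with the $g$-dimensional space spanned by the $\sigma^\star$-invariant classes pulled back from $M/\sigma$, namely $\alpha_1,\dots,\alpha_{2g'},\beta_1,\dots,\beta_{n-1}$.

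An alternative route, closer to Atiyah--Bott and the one I would actually write down, is to use the universal real line bundle on $\Bun^{1,d,\tau}_{M,\sigma}\times M$ and take the Künneth decomposition of its first Chern class mod $2$, as in (\ref{eq:neuGenerator}). Restricting to the real setting, the classes $a_j$ are replaced by the slant products with the basis of $\Ho^1(M,\Zt)$. I would then check two things. First, the products with $\gamma_i$ and $A_i$ either vanish or coincide with those from $\beta_i,\alpha_i$, because $\sigma^\star$ acts on the universal bundle compatibly with $\tau$. Second, the remaining $g$ classes are linearly independent, by evaluating on the $g$ circle factors of the torus.

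The main obstacle is this independence and vanishing check: showing precisely which basis elements survive and that no unexpected relation appears. I would handle it by writing the real Jacobian component explicitly as $\Ho^1(M/\sigma,\mathbb{R})/\Ho^1(M/\sigma,\mathbb{Z})$, using the fact that for type I the quotient $M/\sigma$ is orientable, which is where the type I hypothesis enters. The duality with $\Ho_1$ then makes the degree-one generators exactly $\pi^\star$ of the basis $\alpha'_i,\beta'_j$. Finally, the exterior relations $x^2=0$ for these classes and the polynomial nature of $\omega_1$ follow directly from the torus factor and the $\mathbb{RP}^\infty$ factor respectively.
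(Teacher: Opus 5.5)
Your first step is fine, and it is exactly (\ref{eq:kunnethSplit}). Proposition \ref{prop:split}, $\Pic^d(M)^\sigma_\tau\cong(S^1)^g$ and $g=2g'+n-1$ already give $\Zt[\omega_1]\otimes\Lambda$ on $g$ degree-one classes. The gap is in identifying those classes, and the identification you propose is false as soon as $g'>0$.

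In the Albanese model, the real component through the origin is $\Ho_1(M,\mathbb{R})^{\sigma_*}/\Ho_1(M,\mathbb{Z})^{\sigma_*}$. In the model $\Ho^1(M,\mathbb{R}/\mathbb{Z})$ the real structure is $x\mapsto-\sigma^\star x$, so the relevant torus is the $(-1)$-eigenspace, not $\Ho^1(M,\mathbb{R})^\sigma$. Put $L=\Ho_1(M,\mathbb{Z})^{\sigma_*}$. The Abel--Jacobi restriction $\Ho^1(M,\Zt)\to\Ho^1(\Pic^d(M)^\sigma_\tau,\Zt)=\operatorname{Hom}(L,\Zt)$ is then $x\mapsto x|_L$. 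This is a quotient map, not an identification with a subspace. If $x=(\sigma^\star+\Id)y$, then for every $\ell\in L$ we get $x(\ell)=y(\sigma_*\ell)+y(\ell)=2y(\ell)=0$. So \emph{every} $\alpha_i$ restricts to zero. In type $(2,1,0)$ the kernel is exactly $\langle a_1+a_2,b_1+b_2\rangle=\pi^\star\Ho^1(M/\sigma,\Zt)$. All of your proposed generators die there, and the nonzero classes come from $A_1=a_1$ and $A_2=b_1$. The independence check you plan therefore cannot succeed.

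Switching to the Atiyah--Bott formulation does not repair this. Suppose the class attached to a basis element is its slant product against the corresponding (Poincaré dual) homology class. The same mod $2$ argument then shows that the $A_i$-classes survive and the $\alpha_i$-classes vanish, which is the opposite of your claim. The statement can only hold with the Künneth-coefficient convention of (\ref{eq:neuGenerator}), where a basis element $e$ contributes $c_1/e^\vee$ for the dual basis. Let $K$ be the mod $2$ reduction of the $(-1)$-eigenlattice of $\sigma^\star$ on $\Ho^1(M,\mathbb{Z})$; this is the kernel above, and it lies in $\Ker(\sigma^\star+\Id)$. Under this convention the $A_i$-coefficients do vanish and the $\alpha_i$-coefficients survive.

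However, which of the $\beta_j$- or $\gamma_j$-coefficients survive depends on the arbitrary completion $\gamma_j$ chosen in Subsection \ref{sub:basis}. For example, slanting with a real oval gives zero. Over $M^\sigma$ the universal bundle is a complexification, and $c_1\equiv w_1^2$ kills the mixed Künneth term. Take an M-curve with the $\gamma_j$ chosen to vanish on all ovals. Then the $\beta_j$-coefficients are slants against ovals, so they all vanish. Compatibility of $\sigma^\star$ with $\tau$ cannot decide this.

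Your route does work if you fix the dual-basis convention and add the condition $\operatorname{span}(\gamma_j)\cap K=0$. Under that condition the $\alpha_i$- and $\beta_j$-coefficients restrict to a basis, giving a more elementary argument than the paper's. The paper never uses this restriction. It takes the $\alpha_i$ from Baird's factor $A_{g',1}$ in the Eilenberg--Moore $E_2$-term. It takes the $\beta_j$ from $V_{n,1}$ together with the Liu--Schaffhauser computation for M-curves.
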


\begin{proof}
	As a first step we look at the M--curves, so the type I curves such that $n=g+1$ (see Definition \ref{def:MCurve}). From \cite[Sec. 4.3, eq (4.2)]{zbMATH06272210} we get the following fibration of topological spaces
	\begin{equation}
	\label{eq:LSfiber}
		\begin{tikzcd}[ampersand replacement=\&]
			B(\Omega^2(U(r))) \arrow[r] \& B(\mathcal{G}_\mathbb{C}^\tau) \arrow[r] \& E(G^\tau_{(n,a)}(r)) \times_{G^\tau_{(n,a)}(r)} W^\tau_{(g,n,a)}(r,d),
		\end{tikzcd}
	\end{equation}
	where, for $\tau$ a real structure, we have
	\begin{equation}
		\label{eq:Gtau}
		G^\tau_{(n,a)}=U(r)^a \times (U(r)^\tau)^n, \text{\phantom{ab}} W^\tau_{(g,n,a)}(r,d)=U(r)^{g+a} \times \prod^n_{i=0} O(r)_{(-1)^{w^{(i)}}}.
	\end{equation}

	Here with $w^{(i)}$ we denote the restriction to the $i$--th connected component of the fixed points set $M^\sigma$ of the first universal Stiefel--Whitney class. We recall that $M^\sigma$ is given by a disjoint union of $S^1$, while the fixed point of the real vector bundle $E^\tau$ is a collection of vector bundles over $\mathbb{R}$ over each $S^1$. Once we restrict over a single $S^1_{(i)}$ we get $w^{(i)}=\pm 1$, respecting the orientation of $E_{|S^1_{(i)}}$. Now if we take the orthogonal group of rank $r$, $O(r)$, we can split it in the special orthogonal group $SO(r)$ of matrix with determinant $+1$ and in its complement of matrix with determinant $-1$. In equation (\ref{eq:Gtau}) we take as $O(r)_{(-1)^{w^{(i)}}}$ the connected component of $O(r)$ of matrix with determinant equal to $w^{(i)}$.

	Now we can reduce this to our case with $a=0$, $n=g+1$ and $r=1$. This permits to simplify the calculations, in fact the fibration now becomes:
	\[
		\begin{tikzcd}[ampersand replacement=\&]
			\{\ast\} \arrow[r] \& B(\mathcal{G}_\mathbb{C}^\tau) \arrow[r] \& E(G^\tau_{(g+1,0)}(1)) \times_{G^\tau_{(g+1,0)}(1)} W^\tau_{(g,g+1,0)}(1,d),
		\end{tikzcd}
	\]
	and so we have
	\[B(\mathcal{G}_\mathbb{C}^\tau)\simeq E(G^\tau_{(g+1,0)}(1)) \times_{G^\tau_{(g+1,0)}(1)} W^\tau_{(g,g+1,0)}(1,d).
	\]
	Furthermore we can identify
	\[
		G^\tau_{(g+1,0)}(1)=(U(1)^\tau)^{g+1}=(\{\pm 1\})^{g+1}, \text{\phantom{ab}} W^\tau_{(g,g+1,0)}(1,d)=U(1)^g.
	\]
    Finally it remains only to calculate the equivariant cohomology algebra, but this was already done in \cite[Sec. 4.2.5]{zbMATH06272210} with the scope of finding the Poincaré series using the Leray--Serre spectral sequence to calculate the cohomologies of the spaces in the following fibration:
	\[
		\begin{tikzcd}[ampersand replacement=\&]
			W^\tau_{(g,n,a)}(r,d) \arrow[r] \& E(G^\tau_{(n,a)}(r)) \times_{G^\tau_{(n,a)}(r)} W^\tau_{(g,n,a)}(r,d) \arrow[r] \& G^\tau_{(n,a)}(r)
		\end{tikzcd}
	\]
	which is the fibration given by the Borel construction for equivariant cohomology.

    Working over $\Zt$ we also have the following isomorphism, that for more general coefficients obviously is not true:
	\[
		\Lambda[a,b] \cong \Lambda[a] \otimes \Lambda[b].
	\]
	Explicitly the isomorphism is given by the linear map that sends $a \mapsto a \otimes 1$ and $b \mapsto 1 \otimes b$.

	This now gives us:
	\[
		\Ho^\star_{G^\tau_{(g+1,0)}}(W^\tau_{(g,g+1,0)}(1,d), \Zt) \cong \Zt[y_{1,1}] \otimes \bigotimes^n_{j=2} \Lambda[y_{j,1}],
	\]
	where $y_{j, 1}$ with $j=1, \ldots, n$ are canonical generators of degree $1$ (see \cite{zbMATH06272210})

	But now we get from Proposition \ref{prop:split}, that the moduli stack splits, and so does its cohomology as we have seen in equation (\ref{eq:kunnethSplit}). First of all we know that
	\[
	\Ho^\star(B\mathbb{R}^\star,\Zt) \cong \Zt[\omega_1],
	\]
	where $\omega_1$ is the first universal Stiefel--Whitney class. Now recall that the cohomology of $\Pic^d(M)$ is canonically generated as an exterior algebra by a basis of $\Ho^1(M,\Zt)$. Also, thanks to the fact that we are considering a type I curve and so $n \neq 0$, we have that $\Pic^d(M)^\sigma$ is given by the invariant divisors, and each connected component is a Lagrangian subvariety of the complex Picard variety, which is still also a real abelian variety. So it is generated by a Lagrangian subset of $\Ho^1(M,\Zt)$ composed by invariant classes (compare \cite{zbMATH03845739}).

	We can therefore conclude that the cohomology algebra of the moduli stack $\Bun^{1,d,\tau}_{M,\sigma}$ where $(M,\sigma)$ is an M--curve, is given as:
	\[
		\Ho^\star(\Bun^{1,d,\tau}_{M,\sigma},\Zt)\cong\Zt[\omega_1] \otimes \Lambda[\beta_i]_{i=1,\dots,n-1}.
	\]

	This gives what we stated in the theorem because for M--curves we have $g^\prime=0$ and also $D$ is null as we have seen in Proposition \ref{prop:dick}.

	Now we do the general case. As we have seen from \cite{zbMATH07144072}, for $(M,\sigma)$ of type I we have that $\sigma$ is equivalent to a reflection through a plane, exchanging to top half with the bottom half, as indicated by the black arrow in the following figure:

	\begin{center}
		\begin{figure}[H]
			\includegraphics[width=0.75\textwidth]{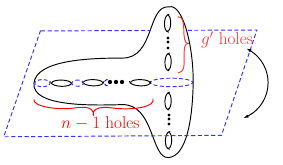}
			\caption{Example of reflection}
			\label{fig:reflection}
		\end{figure}
	\end{center}

	\vspace*{-1.5cm}
	Let us now consider the Eilenberg--Moore spectral sequence (EMss) (see \cite{zbMATH03284042} or \cite{zbMATH07974436} for the general theory) associated to the following pullback diagram (compare \cite{zbMATH06355665})
	\begin{equation}
	\label{eq:BairdDiagram}
		\begin{tikzcd}[ampersand replacement=\&]
			B\tilde{\mathcal{G}}(g^\prime,n,r,\Id) \arrow[r]\arrow[d] \& B\text{Map}_0(M/\sigma \setminus D,U_r) \arrow[d] \\
			\prod^{n}_{i=1}BLU^{\Id}_r \arrow[r] \& \prod^{n}_{i=1}BL_0U_r
		\end{tikzcd}
	\end{equation}
	where $\operatorname{Map_0}(M/\sigma \setminus D,U_r)$ is the subgroup of $\operatorname{Map}(M/\sigma \setminus D,U_r)$ given by those maps that are contractible once restricted to the boundary and where $U_r$ is the unitary group. From \cite[Proposition 6.2]{zbMATH06355665} it follows that the real gauge group is a subgroup of $\tilde{\mathcal{G}}(g^\prime,n,r,\Id)$.

	Applying now the EMss we get the following algebra that lies in the 0--th column of the bigraded algebra $EM^{\star,\star}$ of the $E_2$--term
	\[
		\EM \cong V_{n,1} \otimes A_{g^\prime,1} \otimes \Zt[c_1],
	\]
	where, remembering that $M/\sigma$ is orientable, the Poincaré series of $V_{n,1}$ is equal to $(1+t)^n$ and $A_{g^\prime,1}$ is an exterior algebra with Poincaré series equal to $(1+t)^{2g^\prime}$.
	We obtain this from the fact that $M/\sigma$ is given by a Riemann surface of genus $g^\prime$ with $n$ disks removed. Then taking two copies of it we get back $M$ simply by glueing every boundary circle via the identity map.

	Now following \cite{zbMATH06355665} we obtain a second EMss that, collapsing at the second page, gives us an injection of $\EM$ into the cohomology algebra of the moduli stack:
	\[
		\begin{tikzcd}[ampersand replacement=\&]
			\EM \arrow[r, hook] \& \Ho^\star(\Bun^{1,d,\tau}_{M,\sigma},\Zt) \cong \Ho^\star_{G^\tau_{(n,0)}(1)}(W^\tau_{(g,n,0)}(1,d), \Zt).
		\end{tikzcd}
	\]
	Remembering that we are in the rank one case, we have that this injection is in fact an isomorphism, namely:
	\[
		\EM \cong E(G^\tau_{(n,0)}(1)) \times_{G^\tau_{(n,0)}(1)} W^\tau_{(g,n,0)}(1,d).
	\]

	Therefore we obtain
	\begin{equation}
		\label{eq:firstStepNrank}
		V_{n,1} \otimes \Zt[c_1] \cong \Zt[\omega_1] \otimes \Lambda[\beta_i]_{i=1,\dots,n-1}.
	\end{equation}

	Now we look look at the moduli stack $\Bun^{1,d,\tau}_{M,\sigma}$ of line bundles for a general type I curve $(M, \sigma)$  As before we have an isomorphism between $\EM$ and $\Ho^\star(\Bun^{1,d,\tau}_{M,\sigma},\Zt)$. Also by the construction used in \cite{zbMATH06355665} we note that $V_{n,1} \otimes \Zt[c]$ does not depend on $g^\prime$, and so the isomorphism given in equation (\ref{eq:firstStepNrank}) is still valid. In addition we have another isomorphism (see \cite[Lemma 4.4]{zbMATH06355665})
	\begin{equation}
		\label{eq:secondStepNrank}
		A_{g^\prime,1} \cong \Lambda[\alpha_i]_{i=1,\dots,2g^\prime},
	\end{equation}
	that shows how the classes $\alpha_i, i=1, \ldots, 2g^\prime$ are exactly the images of the generators in the complex case that we have met in equation (\ref{eq:neuGenerator}).

	So we have proved our theorem in the particular case where $(M,\sigma)$ is a type I curve and we get an isomorphism:
	\[
		\Ho^\star(\Bun^{1,d,\tau}_{M,\sigma},\Zt)=\Zt[\omega_1] \otimes \Lambda[\alpha_i]_{i=1,\dots,2g^\prime} \otimes \Lambda[\beta_i]_{i=1,\dots,n-1}.,
	\]

    In order to complete the proof we need to analyze two special cases, namely curves of genus 2 with a single fixed component and curves of genus 3 with two fixed components. These two particular cases are necessary to discuss because they arise for M--curves of genus $0$ and $1$ respectively.

	The two curves that we are analyzing can be pictured as in the following figures, were we have already marked the standard generators:

	\begin{figure}[H]
		\centering
    	\begin{minipage}{0.45\textwidth}
        	\centering
        	\includegraphics[width=\textwidth]{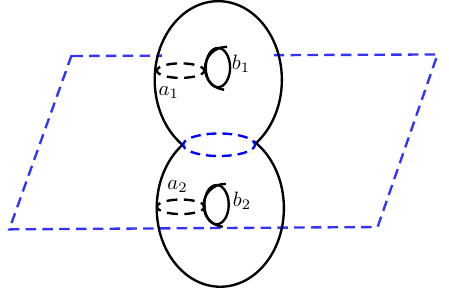}
			\caption{Type $(2,1,0)$}
    	\end{minipage}\hfill
    	\begin{minipage}{0.45\textwidth}
        	\centering
        	\includegraphics[width=\textwidth]{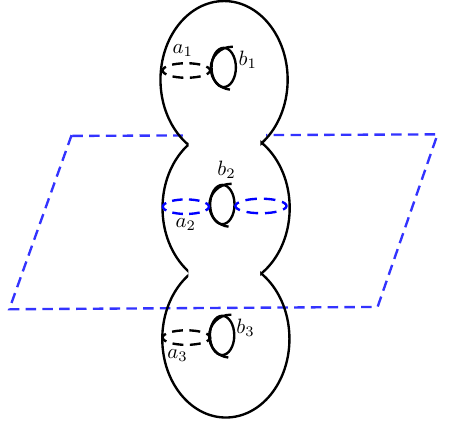}
			\caption{Type $(3,2,0)$}
			\label{fig:320}
    	\end{minipage}
	\end{figure}

	\vspace*{-0.5cm}
	\textit{Case 1:  $(M,\sigma)$ of type $(2,1,0)$}:\\
	We build a basis as we did in Section \ref{sec:basisConstruction}. We have
	\[
		(\alpha_i)_{i=1,\dots,2g^\prime}=(a_1+a_2,b_1+b_2) \text{ and } (A_i)_{i=1,\dots,2g^\prime}=(a_1,b_1).
	\]
	Also we see that in this particular case we have $$\Image(\sigma^\star+\Id)=\Ker(\sigma^\star+\Id).$$ So the classes $(\alpha_i)_{i=1,\dots,2g^\prime}$ must be the ones that generate the cohomology of the moduli stack being the only ones that are $\sigma^\star$--invariant. So we have an isomorphism
	\[
		\Ho^\star(\Bun^{1,d,\tau}_{M,\sigma},\Zt)=\Zt[\omega_1] \otimes \Lambda[\alpha_1,\alpha_2].
	\]

	\textit{Case 2: $(M,\sigma)$ of type $(3,2,0)$}:\\
	Here, as above, we have $g^\prime=1$ and so $D=2$. From this we obtain that
	\[
		(\alpha_i)_{i=1,\dots,2g^\prime}=(a_1+a_3,b_1+b_3) \text{ and } (A_i)_{i=1,\dots,2g^\prime}=(a_1,b_1).
	\]
	We now look at the class $\beta_1$ and the correspondent class $\gamma_1$ and complete the basis.
	Recall that, as in Subsection \ref{sub:basis}, $\beta_1$ is given by the image of a basis of $M/\sigma$ if we consider the reflection as in Figure \ref{fig:320} and so we can argue without loss of generality and get that $\beta_1=a_2$ and $\gamma_1=b_2$. In fact, a basis of $M/\sigma$ is therefore given by the classes $\{a_1+a_3,b_1+b_3,a_2\}$.

	Notice that this set is indeed Lagrangian subset for $\Ho^1(M,\Zt)$. It has dimension $3$ that is half of the dimension of the whole degree $1$ cohomology and if we consider the standard intersection we have that:
	\[
		\alpha_1 \cdot \alpha_2 = (a_1 + a_3) \cdot (b_1 + b_3) = a_1 \cdot b_1 + a_3 \cdot b_3 = 1+1=0,
	\]
	\[
		\alpha_i \cdot \beta_1 = \alpha_i \cdot a_2=0.
	\]

	Summarizing we therefore obtain an isomorphism
	\[
		\Ho^\star(\Bun^{1,d,\tau}_{M,\sigma},\Zt)\cong\Zt[\omega_1] \otimes \Lambda[\alpha_1,\alpha_2] \otimes \Lambda[\beta_1],
	\]
    which completes the proof.
\end{proof}

\section{General rank case for type I curves}

We will now state an prove the main theorem in the general rank case for type I curves and give again a general description of the cohomology algebra of the moduli stack for arbitrary higher rank vector bundles for the case of $M/\sigma$ being orientable.

\begin{theorem}
	\label{th:rkNtheorem}
	Given a type I curve $(M,\sigma)$ of genus $g \geq 2$ and $n$ connected components, the moduli stack $\Bun^{r,d,\tau}_{M,\sigma}$ has the following cohomology algebra:
	\begin{align*}
		\Ho^\star( & \Bun^{r,d,\tau}_{M,\sigma},\Zt)\cong\\
		& \cong\Zt[\omega_k]_{k=1,\dots,r} \otimes \Lambda[\alpha^k_i]_{\substack{i=1,\dots,2g^\prime \\ k=1,\dots,r}} \otimes \Lambda[\beta^k_i]_{\substack{i=1,\dots,n-1 \\ k=1,\dots,r}} \otimes \\
		& \otimes \bigotimes^n_{j=1} \Zt[d^j_{k^\prime}]/((d^j_{k^\prime})^{p_{k^\prime}}) \otimes \Zt[f_k]_{k=2,\dots,r},
	\end{align*}
	where the classes have the following degrees:
	\begin{itemize}
		\item $w_k, \beta^k_i$ have degree $k$,
		\item $\alpha^k_i$ have degree $2k-1$,
		\item $d^j_{k^\prime}$ have degree $k^\prime$,
		\item $f_k$ has degree $2k-2$.
	\end{itemize}

	In addition, $k^\prime$ is an even number such that $k^\prime+1 \leq r$, while $p_{k^\prime}$ is the smallest power of 2 such that $k^\prime \cdot p_{k^\prime} \geq r$.
\end{theorem}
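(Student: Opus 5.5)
The plan is to follow the same two-step strategy used in the proof of Theorem \ref{th:rk1theorem}, now in arbitrary rank. We use the pullback square (\ref{eq:BairdDiagram}) relating $B\tilde{\mathcal{G}}(g^\prime,n,r,\Id)$ to $B\text{Map}_0(M/\sigma\setminus D,U_r)$ and the $n$ copies of $BLU^{\Id}_r$ and $BL_0U_r$. Since $\mathcal{C}^\tau$ is contractible, $\Bun^{r,d,\tau}_{M,\sigma}\simeq B\mathcal{G}^\tau_\mathbb{C}$. By \cite[Proposition 6.2]{zbMATH06355665}, the real gauge group sits inside $\tilde{\mathcal{G}}(g^\prime,n,r,\Id)$. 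For type I curves with $\sigma$ restricting to the identity on each real circle, this inclusion should be a homotopy equivalence onto the component indexed by $\tau$. We will check this using the fibration (\ref{eq:LSfiber}) of Liu--Schaffhauser and comparing the components of $O(r)$ selected by the $w^{(i)}$.

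The first step is to compute the input cohomologies of the Eilenberg--Moore spectral sequence.

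\begin{itemize}
\item For $BL_0U_r$, the cohomology is polynomial on the transgressed classes $c_k$ and exterior on classes of degree $2k-1$, for $k=1,\dots,r$.
\item For $BLU^{\Id}_r$, Proposition \ref{prop:loop} gives $\Ho^\star(SO(r),\Zt)\otimes\Zt[\omega_1,\dots,\omega_r]$. Over $\Zt$, $\Ho^\star(SO(r),\Zt)$ is generated by classes $x_1,\dots,x_{r-1}$ with relations $x_{k}^{2}=x_{2k}$ (zero if $2k\geq r$). Reorganising this as a tensor product of truncated polynomial algebras on the generators not of the form $x_{2k}$ should produce exactly the factors $\Zt[d^j_{k^\prime}]/((d^j_{k^\prime})^{p_{k^\prime}})$, one copy for each boundary circle $j=1,\dots,n$.
\item For $B\text{Map}_0(M/\sigma\setminus D,U_r)$, we use that $M/\sigma\setminus D$ is homotopic to a wedge of $2g^\prime+n-1$ circles. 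This gives $\Zt[c_k]$ tensored with exterior classes of degree $2k-1$, one for each loop. The $2g^\prime$ loops coming from the genus should produce the classes $\alpha^k_i$, as in equation (\ref{eq:secondStepNrank}) via \cite[Lemma 4.4]{zbMATH06355665}. These are the images of the Atiyah--Bott classes $a^{(k)}_j$ of (\ref{eq:neuGenerator}) restricted along the invariant part of the basis from Subsection \ref{sub:basis}.
\end{itemize}

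The second step is to run the spectral sequence. Mirroring the rank one computation, we identify the $0$-th column of the $E_2$-term, $V_{n,r}\otimes A_{g^\prime,r}\otimes\Zt[c_1,\dots,c_r]$, and show that the higher Tor terms vanish or are accounted for by the $SO(r)$ factors. Concretely, the map $\Ho^\star(\prod BL_0U_r)\to\Ho^\star(B\text{Map}_0)$ makes the target free over the exterior part coming from the boundary loops, since the boundary loops are independent except for one relation. So $\mathrm{Tor}$ reduces to a tensor product, and the spectral sequence collapses at $E_2$ by the second Eilenberg--Moore argument of \cite{zbMATH06355665}. Counting the pieces:

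\begin{itemize}
\item $\Zt[c_k]$ for $k\geq 2$, after mod $2$ reduction, supplies the classes $f_k$ of degree $2k-2$. These arise from the Künneth component $f_i\otimes[M]$ in (\ref{eq:neuGenerator}) pulled back along $M\to M/\sigma$.
\item The polynomial part from Proposition \ref{prop:loop} supplies the $\omega_k$.
\item The $n-1$ independent boundary loops supply the $\beta^k_i$, exactly as equation (\ref{eq:firstStepNrank}) did for $r=1$. Here $\omega_1$ absorbs $c_1$.
\end{itemize}

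The main obstacle will be collapse, and in particular multiplicative collapse. The $E_\infty$ page gives only an associated graded algebra, so we must rule out extension problems, for example a nonzero square of an exterior class $\alpha^k_i$ or $\beta^k_i$ in the genuine cohomology. For this we would first compare Poincaré series with the formulas of Liu--Schaffhauser \cite{zbMATH06272210} and Baird \cite{zbMATH06355665}, which forces collapse additively. To lift the exterior classes to genuine classes, we would then represent them as slant products of Stiefel--Whitney classes of the universal real bundle over $\Bun^{r,d,\tau}_{M,\sigma}\times M$ with the invariant classes of $\Ho^1(M,\Zt)$. Squares of these classes should vanish by a degree argument on $M$, or else be expressed through the $f_k$, which we would need to check. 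Finally, the low-genus cases $(2,1,0)$ and $(3,2,0)$ are handled exactly as in Theorem \ref{th:rk1theorem}, checking that the Lagrangian basis still gives the stated generators.
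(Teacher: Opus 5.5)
\noindent There is a genuine gap: you misidentify where the classes $f_k$ of degree $2k-2$ come from, and this rests on the false claim that the $\mathrm{Tor}$ term is a plain tensor product.

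\emph{The $f_k$ cannot come from $\Zt[c_k]$.} The class $c_k$ has degree $2k$; in (\ref{eq:neuGenerator}), $c_i\otimes 1$ and $f_i\otimes[M]$ are different Künneth components. Moreover $\Zt[c_1,\dots,c_r]$ is already used up. In $E_2^{0,\star}$ each boundary copy $c^{(j)}_k$ maps to $(\omega^{(j)}_k)^2$ in $\Ho^\star(BLU^{\Id}_r,\Zt)$, and to the single class $c_k$ in $\Ho^\star(B\mathrm{Map}_0,\Zt)$. Hence $c_k=(\omega^{(j)}_k)^2$ for every $j$. This is exactly how the paper turns $V_{n,r}\otimes\Zt[c_1,\dots,c_r]$ into $\Zt[\omega_k]\otimes\Lambda[\beta^k_i]\otimes\bigotimes_j\Ho^\star(SO(r),\Zt)$ in (\ref{eq:fourthStepNrank}), via $\Zt[a]\cong\Lambda[a]\otimes\Zt[a^2]$. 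It is also why $\beta^k_i=\omega^{(i)}_k+\omega^{(n)}_k$ squares to zero. Letting $\omega_1$ absorb $c_1$ while reusing $c_2,\dots,c_r$ as the $f_k$ therefore double counts.

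\emph{The freeness claim fails in your own setup.} Take $M/\sigma\setminus D$ to be a wedge of $2g'+n-1$ circles, and let $\epsilon_k$ be the sum of the $n$ boundary classes of degree $2k-1$ in $\Ho^\star(\prod_j BL_0U_r,\Zt)$.
\begin{itemize}
\item $\epsilon_k$ maps to zero in $\Ho^\star(B\mathrm{Map}_0,\Zt)$, since $\sum_j[\partial_j]=0$ in $\Ho_1$.
\item Each boundary class, hence $\epsilon_k$, also maps to zero in $\Ho^\star(\prod_j BLU^{\Id}_r,\Zt)$, since the loop suspension of $w_k^2$ vanishes mod $2$.
\end{itemize}
So $\mathrm{Tor}$ contains the factor $\bigotimes_{k=2}^r\mathrm{Tor}_{\Lambda[\epsilon_k]}(\Zt,\Zt)$. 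Each factor is a divided power algebra on a class in column $-1$ of total degree $2k-2$. The index starts at $k=2$ because $L_0U_r$ has no degree-one class. These negative columns are where the $f_k$ actually live. Additively they reproduce $\Ho^\star(\Omega SU(r),\Zt)\cong\Ho^\star(B(\Omega^2U(r)),\Zt)$. So ``the higher Tor terms vanish'' is false. The $E_2$-term as you count it also has the wrong Poincaré series: it has $\prod_{k\ge 2}(1-t^{2k})^{-1}$ where $\prod_{k\ge 2}(1-t^{2k-2})^{-1}$ is needed. Your comparison with Liu--Schaffhauser and Baird would therefore expose a mismatch rather than force collapse.

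The paper avoids this bookkeeping. It treats $\EM$ as the column-$0$ part only. It obtains the $f_k$ from the fibre $B(\Omega^2U(r))$ of the Liu--Schaffhauser fibration (\ref{eq:LSfiber}), whose Serre spectral sequence it asserts collapses. Its third step, an induction on the rank, shows that $\EM$ has no component in $\Ho^\star(B(\Omega^2U(r)),\Zt)$, which yields (\ref{eq:thirdStepNrank}). Your single-spectral-sequence route can be repaired by computing the negative columns as above; a Poincaré series comparison could then give additive collapse.

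Two further points need care.
\begin{itemize}
\item Identifying the $f_k$ with mod $2$ reductions of the complex components $f_i\otimes[M]$ does not work in these fibre directions. Modify a family on a disc $\Delta$ in one half of $M$ and correspondingly on $\sigma(\Delta)$. Because the real structure conjugates and $\sigma$ reverses orientation, the two discs contribute $u_k$ and $-(-1)^k u_k$ to $c_k/[M]$, where $u_k$ is the double loop suspension of $c_k$. So the mod $2$ reduction restricts to zero on $B(\Omega^2U(r))$.
\item Your reorganisation of $\Ho^\star(SO(r),\Zt)$ is correct, but it produces truncated generators in odd degrees, as in Lemma~\ref{lm:samLemma}. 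It matches the factors in the statement only if $k'$ there is read as odd.
\end{itemize}
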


\begin{proof}
	Let us recall the first Eilenberg--Moore Spectral Sequence associated to diagram (\ref{eq:BairdDiagram}) that we have used in the proof of Theorem \ref{th:rk1theorem}. We have the following algebra that lies in the 0--th column of a bigraded algebra:
	\[
		\EM \cong V_{n,r} \otimes A_{g^\prime,r} \otimes \Zt[c_1,\dots,c_r].
	\]
	Now, remembering that $M/\sigma$ is orientable and that we consider now the general rank case, we have that the Poincaré series of $V_{n,r}$ is equal to $\prod^r_{i=1}(1+t^i)^n \prod^{r-1}_{i=1}(1+t^i)^n$ and $A_{g^\prime,r}$ is an exterior algebra with Poincaré series equal to $\prod^r_{i=1}(1+t^{2i-1})^{2g^\prime}$.

	Now we also know that the cohomology of the moduli stack is given via the fibration of equation (\ref{eq:LSfiber}) (compare \cite{zbMATH06272210}). The Leray-Serre spectral sequence (LSss) associated to this fibration collapses because $\tau$ is a real structure and type I curves have $n > 0$. So we have that:
	\[
		\Ho^\star(\Bun^{r,d,\tau}_{M,\sigma},\Zt) \cong \Ho^\star(B(\Omega^2(U(r))),\Zt) \otimes \Ho^\star_{G^\tau_{(n,a)}(r)}(W^\tau_{(g,n,a)}(r,d),\Zt).
	\]

	As we already saw from \cite{zbMATH06355665} we also get a second EMss which collapses at the second page. From this we get again an inclusion of $\EM$ into the whole cohomology algebra of the moduli stack.

	We will proceed with the proof in several steps, starting with what we know in the rank one case and then going up with the rank.

	\textit{First step:}\\
	Take the Klein surface $(M,\sigma)$ with $g=n-1$ and consider now the general rank case.

	From the work of Liu--Schaffhauser \cite{zbMATH06272210} and from Theorem \ref{th:rk1theorem} we have that:
	\[
		\Ho^\star_{G^\tau_{(n,0)}(r)}(W^\tau_{(n-1,n,0)}(r,d),\Zt) \cong \Zt[\omega_k]_{k=1,\dots,r} \otimes \Lambda[\beta^k_i]_{\substack{i=1,\dots,n-1 \\ k=1,\dots,r}} \otimes S,
	\]
	where the algebra $S$ has a Poincaré series equal to $\prod^{n}_{i=1}P_t(BO(r),\Zt)$.

	On the other hand we can follow the alternative construction of $\EM$ following \cite{zbMATH06355665}. First we consider the cohomology algebra $\Ho^\star\left(\prod^n_{i=1}BLU^{\Id}_r,\Zt\right)$. It is isomorphic to the following tensor product by simply using the Künneth formula and definition \ref{def:loop}:
	\[
		\Ho^\star\left(\prod^n_{i=1}BLU^{\Id}_r,\Zt\right) \cong \bigotimes^n_{i=1} \Ho^\star(SO(r),\Zt) \otimes \Zt[\omega^k_i]_{\substack{i=1,\dots,n \\ k=1,\dots,r}}.
	\]
	To determine $\EM$ we need to tensorize it with the Koszul complex and then apply the Koszul differential (the interior one, see \cite[Lemma 6.4]{zbMATH06355665}) to it. For this we use the following algebra isomorphism:
	\[
		\Zt[a] \cong \Lambda[a] \otimes \Zt[a^2].
	\]
	This will give us the following description as a graded algebra of the second page of our EMss:
	\begin{equation}
		\label{eq:fourthStepNrank}
		\EM \cong \Zt[\omega_k]_{k=1,\dots,r} \otimes \Lambda[\beta^k_i]_{\substack{i=1,\dots,n-1 \\ k=1,\dots,r}} \otimes \bigotimes^n_{i=1} \Ho^\star(SO(r),\Zt).
	\end{equation}
	The classes $\beta^k_i$ are given by the restriction of the universal Stiefel--Whitney classes over the $k$--th connected component of $M^\sigma$ as given by the cohomology of the real loop group.

	We will later see that, as expected, we have the following isomorphism:
	\[
		S \cong \bigotimes^n_{i=1} \Ho^\star(SO(r),\Zt).
	\]

	We need to describe the generators of the components. Exploiting \cite[Remark 6.6]{zbMATH06355665} we have the following injection:
	\[
		\begin{tikzcd}[ampersand replacement=\&]
			\Ho^\star(SO(r),\Zt) \arrow[r, hook] \& \EM.
		\end{tikzcd}
	\]

	We continue now by describing the generators of $\Ho^\star(SO(r),\Zt)$.
	First of all, we have a homotopy equivalence between $SO(r)$ and $\Omega BSO(r)$, so calculating the cohomology of $\Omega BSO(r)$ will give us the cohomology of $SO(r)$.

	We have that (compare \cite[Example 5.H]{zbMATH01565334})
	\[
		\Ho^\star(BSO(r),\Zt) \cong \Zt[w_2, \dots,w_r].
	\]
	From this we obtain the cohomology algebra of $\Omega BSO(r)$ by using the following Lemma which is a consequence of \cite[Theo. 1]{zbMATH06791174}.

	\begin{lemma}
		\label{lm:samLemma}
		The mod $2$ cohomology of $\Omega BSO(r)$ is given by the following algebra:
		\[
		\Ho^\star(\Omega BSO(r), \Zt)=\Zt[\overline{w_{2k}}]/I,
		\]
		where
		\begin{itemize}
			\item $\overline{w_{2k}}$ are the suspension classes of the even degree universal Stiefel--Whitney classes,
			\item $I$ is the ideal generated by $\overline{w_{2k}}^{p_{2k}}$, with $p_{2k}$ being the smallest power of $2$ such that $(2k-1) \cdot p_{2k} \geq r$.
		\end{itemize}
	\end{lemma}

	We will give the proof of this lemma after finishing the proof of the main theorem.

	\textit{Second step:}\\
	Now we have to take a generic Klein surface and consider the generic rank case using the previous considerations.

	First of all we notice that the only part that is now changing in $\EM$ is the one given by $A_{g^\prime,r}$. By \cite[Lemma 6.4]{zbMATH06355665} we know that it is isomorphic to an exterior algebra in $2g^\prime \cdot r$ generators of odd degrees.

	Following the proof of \cite[Lemma 6.4]{zbMATH06355665} we notice that the generators are the same of the classical generators for the cohomology of the moduli stack of complex vector bundles (see \cite{zbMATH05605320}, and equation (\ref{eq:neuGenerator}), namely the ones that are denoted there as $a^{(i)}_j$).

	We have now to choose which ones are the right $2g^\prime \cdot r$ generators. But it is clear that the correct ones are, after fixing the basis that we have constructed in Section \ref{sec:basisConstruction}, the generators given by the images of the generators of the map $\sigma^\star+\Id$. So we get:
	\begin{equation}
		\label{eq:fifthStepNrank}
		A_{g^\prime,r} \cong \Lambda[\alpha^k_i]_{\substack{i=1,\dots,2g^\prime \\ k=1,\dots,r}},
	\end{equation}
	where the degree of $\alpha^k_i$ is equal to $2k-1$.

	\textit{Third step:}\\
	Now we want to prove that also in the general case the following isomorphism still holds:
	\begin{equation}
		\label{eq:thirdStepNrank}
		\EM \cong \Ho^\star_{G^\tau_{(n,0)}(r)}(W^\tau_{(g,n,0)}(r,d),\Zt).
	\end{equation}

	First we notice that the two algebras have the same Poincaré series once we fix all the parameters.

	These two algebras both live inside the cohomology algebra of the moduli stack, and they depend on the same parameters in the same way.

	Consider now the following maps:
	\[
		\begin{tikzcd}[ampersand replacement=\&]
			\EM \arrow[r, hook] \& \Ho^\star(\Bun^{r,d,\tau}_{M,\sigma}, \Zt) \arrow[r, two heads] \& \Ho^\star_{G^\tau_{(n,0)}(r)}(W^\tau_{(g,n,0)}(r,d),\Zt),
		\end{tikzcd}
	\]
	where the first map is given by the injection while the second one is given by the projection to the second factor through the isomorphism:
	\[
		\Ho^\star(\Bun^{r,d,\tau}_{M,\sigma},\Zt) \cong	\Ho^\star(B(\Omega^2(U(r))),\Zt) \otimes \Ho^\star_{G^\tau_{(n,a)}(r)}(W^\tau_{(g,n,a)}(r,d),\Zt).
	\]

	In the particular case that the image of the injection lies completely in the second factor we are finished, so we want to show that the following map
	\begin{equation}
		\label{eq:zeroMap}
		\begin{tikzcd}[ampersand replacement=\&]
			\EM \arrow[r, hook] \& \Ho^\star(\Bun^{r,d,\tau}_{M,\sigma}, \Zt) \arrow[r, two heads] \& \Ho^\star(B(\Omega^2(U(r))),\Zt),
		\end{tikzcd}
	\end{equation}
	is constant zero.

	We proceed by induction on the rank. The base case, i. e. rank equal to one, was already analyzed in theorem \ref{th:rk1theorem}. It is true, because $\Ho^\star(B(\Omega^2(U(r))),\Zt)$ is trivial when $r=1$. Now let's do the inductive step.

	When we consider for the rank $r \mapsto r+1$ in $\EM$ we get some new generators, namely the following:
	\begin{enumerate}
		\item One degree $r+1$ generator of a polynomial algebra,
		\item $n-1$ degree $r+1$ generators of an exterior algebra,
		\item $2g^\prime$ degree $2(r+1)-1$ generators of an exterior algebra,
		\item Eventually $n$ degree $r$ generators given by the change from $\Ho^\star(SO(r),\Zt)$ to $\Ho^\star(SO(r+1),\Zt)$.
	\end{enumerate}

	In addition, we also know that the cohomology of $B(\Omega^2(U(r+1)))$ gives a new generator of degree $2r$ and that it is in fact a polynomial algebra.

	Now we notice that looking at the degrees of the classes, the only one that, by a degree argument, can survive are the ones from (4) once we square them.

	But, by construction of the cohomology of $\Ho^\star(SO(r+1),\Zt)$, we have that a rank $r$ generator, if it exists, is sent to $0$ by the square map.

	This proves that the map described in equation (\ref{eq:zeroMap}) is always equal to the constant zero map and so we get the desired isomorphism of graded algebras.

	\textit{Fourth step:}\\
	We can now finish the proof for any type I curve of general type $(g,n,0)$. We  have that:
	\begin{align*}
		\EM \cong & \Zt[\omega_k]_{k=1,\dots,r} \otimes \Lambda[\alpha^k_i]_{\substack{i=1,\dots,2g^\prime \\ k=1,\dots,r}} \otimes \Lambda[\beta^k_i]_{\substack{i=1,\dots,n-1 \\ k=1,\dots,r}} \otimes \\
		& \otimes \bigotimes^n_{j=1} \Zt[d^j_{k^\prime}]/((d^j_{k^\prime})^{p_{k^\prime}}),
	\end{align*}
	but this algebra is also isomorphic to
	\[
		\Ho^\star_{G^\tau_{(n,0)}(r)}(W^\tau_{(g,n,0)}(r,d),\Zt)
	\]
	thanks to equation (\ref{eq:thirdStepNrank}).

	Finally using the diagram \cite[Sec. 4.3, eq (4.2)]{zbMATH06272210}, we get that the last generators needed are exactly the same as those of the cohomology algebra of the moduli stack in the complex case reduced modulo $2$. This gives us finally the desired isomorphism of graded algebras
	\begin{align*}
		\Ho^\star( & \Bun^{r,d,\tau}_{M,\sigma},\Zt)\cong \\
		& \cong\Zt[\omega_k]_{k=1,\dots,r} \otimes \Lambda[\alpha^k_i]_{\substack{i=1,\dots,2g^\prime \\ k=1,\dots,r}} \otimes \Lambda[\beta^k_i]_{\substack{i=1,\dots,n-1 \\ k=1,\dots,r}} \otimes \\
		& \otimes \bigotimes^n_{j=1} \Zt[d^j_{k^\prime}]/((d^j_{k^\prime})^{p_{k^\prime}}) \otimes \Zt[f_k]_{k=2,\dots,r},
	\end{align*}
	which concludes the proof of the theorem.
\end{proof}

\begin{proof}[Proof of Lemma \ref{lm:samLemma}]
	In order to apply \cite[Theorem 1]{zbMATH06791174}, we need to have a simply connected space that has a polynomial algebra as cohomology. But the space under consideration here is $BSO(r)$ so it satisfies the conditions.

	By the theorem it exists a subset $\mathcal{S}$ of the set of generators $\{w_2,\dots,w_r\}$. This subset is defined taking the generators $w_k$ such that:
	\[
		w_k \notin \Image Sq_1 \; \mod \HHplus,
	\]
	where $Sq_1$ is the mod $2$ Steenrod operator
	\[
		\begin{tikzcd}[ampersand replacement=\&]
			Sq_1: H^n(X,\Zt) \arrow[r] \& H^{2n-1}(X,\Zt),
		\end{tikzcd}
	\]
	defined as $Sq_1(w_k)=Sq^{k-1}(w_k)$, while $\HHplus$ is the subalgebra of the cohomology generated by the product of two classes with positive degree.

	Now for applying \cite[Theorem 1]{zbMATH06791174} we need to check the $\smile_1$--height of the generators $w_k \in \mathcal{S}$. By definition this is the smallest integer $\nu_k$ such that applying $\nu_k +1$ times the Steenrod operator $Sq_1$ to $w_k$ leads to an element that is in $\HHplus$. Doing this we will also see that $w_k \in \mathcal{S}$ if and only if $k$ is even.

	First we analyze the rank cases $r=2$ and $r=3$ in order to understand the general case.

	\textit{Case rank $r=2$:}\\
	For rank $2$ we have to consider only $w_2$.  Doing the calculation using the Wu formula (see \cite[Section 8, Problem 8-A]{zbMATH03468033}), we get
	\[
		Sq_1(w_2)=Sq^1(w_2)=w_1w_2,
	\]
	because $w_3=0$ in this case. So we have that $w_2 \in \mathcal{S}$ and $\nu_2=0$.

	\textit{Case rank $r=3$:}\\`'
	Now we have the following relation by applying $Sq_1$ to the class $w_2$:
	\[
		Sq_1(w_2)=Sq^1(w_2)=w_1w_2+w_3,
	\]
	so we see directly that $w_3 \notin \mathcal{S}$. Also applying $Sq^1$ another time, but now to $w_3$ and because $Sq_1$ sends $\HHplus$ into $\HHplus$, we get:
	\[
		Sq_1(w_3)=Sq^2(w_3)=w_2w_3.
	\]
	From this we therefore get $\nu_2=1$. And this is correct, because we want the smallest integer such that: $1 \cdot 2^{\nu_2+1} \geq 3$, and it is exactly $\nu_2=1$.

	Now we can generalize this procedure. Take a class $w_k$ with $k=2k^\prime$ even. We now get that, considering that some classes can be zero:
	\[
		Sq_1(w_k)=w_{4k^\prime-1}+\sum^{k-2}_{i=0}w_{k-i-1}w_{k+i}.
	\]
	So for each $r < 4k^\prime-1$, we have $\nu_k=0$ and in any case $w_{4k^\prime-1} \notin \mathcal{S}$ for each value of $k^\prime$. Also it is easy to check that:
	\[
		(k-1) \cdot 2^{\nu_k+1} = (k-1) \cdot 2 = 2k-2 = 4k^\prime-2 \geq r.
	\]

	If $r \geq 4k^\prime-1$, we get $\nu_k \neq 0$ and we have to apply again $Sq_1$ to the $(4k^\prime-1)$--degree classes. We use again the Wu formula and we obtain:
	\[
		Sq_1(w_{4k^\prime-1})=w_{8k^\prime-3}+\sum^{4k^\prime-3}_{i=0}w_{4k^\prime-i-2}w_{4k^\prime+i-1}.
	\]
	Again we get odd degree classes not in $\mathcal{S}$ and we can do so again by similar reasoning as above. Generalizing this we therefore get that for each choice of integers $i \geq 2,j \geq 1$ we have that the odd class of degree
	\[
		2^ij-2^{i-1}+1=2^{i-1}(2j-1)+1,
	\]
	does not live in $\mathcal{S}$.

	We want to prove that there exists a choice of $i \geq 2,j \geq 1$ that gives us every odd number, i. e.  that for each integer $k \geq 1$ there exists a choice of $i \geq 2,j \geq 1$ such that:
	\[
		2^{i-2}(2j-1)=k.
	\]
	But this can be simply done exploiting the unique factorization of $k$. In particular if $k$ can be factorized as $2^l \cdot k^\prime$, we have that $k^\prime$ is odd and we can take the pair $(l+2,\frac{k^\prime+1}{2})$ as the pair $(i,j)$.

	Now, as we already observed in the case of rank $3$, if we take the smallest power of $2$ such that $(2k^\prime-1) \cdot p_{2k^\prime} \geq r$ we have the following chain of inequalities:
	\[
		r \leq (2k^\prime-1) \cdot p_{2k^\prime}=(2k^\prime-1) \cdot 2^{\nu_{2k^\prime}+1}=2^{\nu_{2k^\prime}+2}k^\prime-2^{\nu_{2k^\prime}+1} < 2^{\nu_{2k^\prime}+2}k^\prime-2^{\nu_{2k^\prime}+1}+1,
	\]
	where the last number is exactly the degree of the class that should be paired with $w_0$ in the Wu formula once we apply $\nu_{2k^\prime}+1$ times $Sq_1$ to the class $w_{2k^\prime}$.

	Now we have established all the conditions of \cite[Theorem 1]{zbMATH06791174} and we have got all the parameters needed for applying the theorem and therefore have finished the proof of Lemma \ref{lm:samLemma}.
\end{proof}

\section*{Acknowledgments}
\noindent The first author is truly grateful to his supervisor Roberto Pignatelli for the valuable discussions and insights he provided. The second author is partially supported by PRIN project Moduli spaces and special varieties (2022). He also acknowledges the University of Pavia for financial support through a ContRic-DSM56 grant. Both authors are members of INdAM-GNSAGA. The figures in this paper were created using the software package LaTeXDraw, which we learned about from Daniel Dugger. The authors are grateful to him for sharing the code used in the illustrations of \cite{zbMATH07144072}. The authors also gratefully acknowledge Florent Schaffhauser for pointing out several errors in an earlier draft of this article.

\printbibliography

\end{document}

\typeout{get arXiv to do 4 passes: Label(s) may have changed. Rerun}